\newtheorem{theorem}{Theorem}
\newtheorem{proposition}[theorem]{Proposition}
\newtheorem{lemma}[theorem]{Lemma}
\newtheorem{corollary}[theorem]{Corollary}
\theoremstyle{definition}
\newtheorem{definition}[theorem]{Definition}
\newtheorem{example}[theorem]{Example}
\newtheorem{remark}[theorem]{Remark}
\newtheorem{question}[theorem]{Question}
\newcommand{\defin}[1]{\emph{#1}}
\newcommand{\setN}{\mathbb{N}}
\newcommand{\setZ}{\mathbb{Z}}
\newcommand{\setC}{\mathbb{C}}
\newcommand{\xvec}{\mathbf{x}}
\newcommand{\tpi}{\tilde{\pi}}
\newcommand{\ttheta}{\tilde{\theta}}
\newcommand{\trho}{\tilde{\rho}}
\newcommand{\symS}{S}
\newcommand{\revsort}[1]{\boldsymbol{\lambda}(#1)}
\newcommand{\elementaryE}{\mathrm{e}}
\newcommand{\atom}{\mathcal{A}}
\newcommand{\hlPolyP}{\mathrm{P}}
\newcommand{\key}{\mathcal{K}}
\newcommand{\macdonaldP}{\mathrm{P}}
\newcommand{\macdonaldE}{\mathrm{E}}
\newcommand{\schurS}{\mathrm{s}}
\newcommand{\NAF}{\mathrm{NAF}}
\DeclareMathOperator{\length}{\ell}
\DeclareMathOperator{\id}{id}
\DeclareMathOperator{\leg}{leg}
\DeclareMathOperator{\arm}{arm}
\DeclareMathOperator{\inv}{inv}
\DeclareMathOperator{\maj}{maj}
\DeclareMathOperator{\coinv}{coinv}
\DeclareMathOperator{\someStat}{stat}
\DeclareMathOperator{\Des}{\mathrm{Des}}
\title{Properties of non-symmetric Macdonald polynomials at $q=1$ and $q=0$}
\author{Per Alexandersson and Mehtaab Sawhney}
\address{Dept. of Mathematics, Royal Institute of Technology, SE-100 44 Stockholm, Sweden}
\address{Dept. of Mathematics, Massachusetts Institute of Technology, Cambridge MA, U.S.A}
\email{per.w.alexandersson@gmail.com}
\email{msawhney@mit.edu}
\keywords{Macdonald polynomials, elementary symmetric functions, key polynomials, Hall-Littlewood, Demazure characters, factorization}
\subjclass[2010]{05E10,05E05}
\begin{document}

\begin{abstract}
We examine the non-symmetric Macdonald polynomials $\macdonaldE_\lambda(\xvec;q,t)$ at $q=1$, 
as well as the more general permuted-basement Macdonald polynomials.
When $q=1$, we show that $\macdonaldE_\lambda(\xvec;1,t)$ is symmetric and independent of $t$
whenever $\lambda$ is a partition.
Furthermore, we show that for general $\lambda$, this expression factors into a symmetric and a non-symmetric part,
where the symmetric part is independent of $t$, while the non-symmetric part only depends on the relative order of the entries in $\lambda$.

We also examine the case $q=0$, which give rise to so called permuted-basement $t$-atoms.
We prove expansion-properties of these, and as a corollary, prove that Demazure characters (key polynomials)
expand positively into permuted-basement atoms. This complements the result that permuted-basement atoms are atom-positive.
Finally, we show that a product of a permuted-basement atom and a Schur polynomial is again 
positive in the same permuted-basement atom basis, and thus interpolates between two results by
Haglund, Luoto, Mason and van Willigenburg.

The common theme in this project is the application of basement-permuting operators as well as combinatorics on fillings,
by applying results in a previous article by the first author.
\end{abstract}

\maketitle

\tableofcontents

\section{Introduction}

% \todo{
% See {https://escholarship.org/content/qt7rq152bp/qt7rq152bp.pdf}, Thm. 13. This is a $t=0$-analogue of what is done in second part,
% BUT, they add an extra type-B condition, and the fillings are no longer specializations of Jims formula. Furthermore, for each basement, it is not always a basis (I think).
% }

The non-symmetric Macdonald polynomials, $\macdonaldE_\lambda(\xvec;q,t)$
were introduced by Macdonald and Opdam in \cite{Macdonald1994,Opdam1995}. 
They can be defined in other root systems. We only consider the type $A$
for which there is a combinatorial rule, discovered by Haglund, Haiman and Loehr, \cite{HaglundHaimanLoehr2005}.
These non-symmetric Macdonald polynomials specialize to the Demazure characters, $\mathcal{K}_\lambda$, (or key polynomials)
at $q=t=0$, and at $t=0$, they are affine Demazure characters, see \cite{Ion2003}.
Furthermore, at $q=t=\infty$, $\macdonaldE_\lambda(\xvec;\infty,\infty)$ reduces to the so called
Demazure atoms, $\mathcal{A}_\lambda$, (also known as standard bases), see \cite{Mason2009,Lascoux1990Keys}.
The stable limit of $\macdonaldE_\lambda(\xvec;q,t)$ gives the classical symmetric Macdonald polynomials 
(up to a rational function in $q$ and $t$, depending on $\lambda$),
denoted $P_\lambda(\xvec;q,t)$, see \cite{Macdonald1995}.
For a quick overview, see the diagram \eqref{eq:speciOverview} below, where $\ast$ denotes this stable limit.

\begin{align}\label{eq:speciOverview}
\begin{CD}
&& \mathcal{A}_\lambda(\xvec) \\
& &  @AA{ \substack{ q=\infty \\ t=\infty } }A \\
\mathcal{K}_\lambda(\xvec) @<{q=t=0}<< \macdonaldE_\lambda(\xvec;q,t) @>{\ast}>> \macdonaldP_\lambda(\xvec;q,t) \\
  & & @VV{ \substack{ \lambda \text{ partition}\\ q=1 \\ t=0  } }V  @VV{ \substack{ q=1 \\ t=0  } }V \\
   && e_{\lambda'}(\xvec) & & m_\lambda(\xvec).
\end{CD}
\end{align}

The topic of this paper is a generalization that arise naturally from Haglund's combinatorial formula,
namely the \emph{permuted basement Macdonald polynomials}, see \cite{Alexandersson15gbMacdonald, Ferreira2011}.
Recently, an alcove walk model was given for these as well, see \cite{FeiginMakedonskyi2015b,FeiginMakedonskyi2015}. 
This generalize the alcove walk model by Ram and Yip, \cite{RamYip2011}, for general type non-symmetric Macdonald polynomials.

The permuted basement Macdonald polynomials are indexed with an extra parameter, $\sigma$,
which is a permutation. For each fixed $\sigma \in \symS_n$, the set $\{ \macdonaldE^\sigma_\lambda(\xvec;q,t)\}_\lambda$
is a basis for the polynomial ring $\setC[x_1,\dotsc,x_n]$, as $\lambda$ ranges over weak compositions of length $n$.

The current paper is the only one (to our knowledge) that studies this property in the permuted-basement setting.
There has been previous research regarding various factorization properties of Macdonald polynomials,
for example, \cite{Descouens2008,Descouens2012} concern symmetric Macdonald 
polynomials and the modified Madonald polynomials when $t$ is taken to be a root of unity.
In \cite{1707.00897}, various factorization properties 
of non-symmetric Macdonald polynomials are observed experimentally (in particular, the specialization $q=u^{-2}$, $t=u$) 
in the last section of the article.

\subsection{Main results}

The first part of the paper concerns properties of the specialization $\macdonaldE^\sigma_\lambda(\xvec;1,t)$.
We show that for any fixed basement $\sigma$ and composition $\lambda$,
\begin{align}\label{eq:introFactorizationResult}
 \macdonaldE^\sigma_\lambda(\xvec;1,t) = \left( \elementaryE_{\lambda'}(\xvec) / \elementaryE_{\mu'}(\xvec) \right) \macdonaldE^\sigma_{\mu}(\xvec;1,t)
\end{align}
where $\mu$ is the \emph{weak standardization} (defined below) of $\lambda$. 
Note that $\elementaryE_{\lambda'}(\xvec) / \elementaryE_{\mu'}(\xvec)$
is an elementary symmetric polynomial \emph{independent} of $t$.
We also show that in the case $\lambda$ is a \emph{partition}, we have
\[
 \macdonaldE^\sigma_\lambda(\xvec;1,t) = \elementaryE_{\lambda'}(\xvec)
\]
which is independent of $\sigma$ and $t$. This property is rather surprising and not evident from
Haglund's combinatorial formula, \cite{HaglundNonSymmetricMacdonald2008}.
Our proofs mainly use properties of Demazure--Lusztig operators, see \eqref{eq:demLuzOps} below for the definition.

\bigskip 

In the second half of the paper, we study properties of the specialization $\macdonaldE^\sigma_\lambda(\xvec;0,t)$.
At $\sigma = \id$, these are $t$-deformations of so called \emph{Demazure atoms},
so it is natural to introduce the notation $\atom^{\sigma}_\alpha(\xvec;t) \coloneqq \macdonaldE^\sigma_\lambda(\xvec;0,t)$,
which are referred to as \emph{$t$-atoms}.
The $t$-atoms for $\sigma=\id$ were initially considered in \cite{Haglund2011463},
where they prove a close connection with Hall--Littlewood polynomials.
The Hall--Littlewood polynomials $\hlPolyP_\lambda(\xvec;t)$ are obtained as the specialization $q=0$ in the 
classical Macdonald polynomial $\macdonaldP_\lambda(\xvec;q,t)$.
In fact, it was proven in \cite{Haglund2011463} that the ordinary Hall--Littlewood polynomials $\hlPolyP_\mu(\xvec;t)$ can be expressed as
\begin{equation}\label{eq:HLinGenAtoms}
\hlPolyP_\mu(\xvec;t) = \sum_{\gamma : \revsort{\gamma}=\mu} \atom_\gamma(\xvec;t)
\end{equation}
whenever $\mu$ is a partition, and $\revsort{\gamma}$ denotes the unique partition with the parts of $\gamma$
rearranged in decreasing order.

\medskip

Our main result regarding the $t$-atoms is as follows:
If $\tau \geq \sigma$ in Bruhat order, then $\atom^{\tau}_\gamma(\xvec;t)$ admits the expansion
\begin{align}\label{eq:texpansion}
\atom^{\tau}_\gamma(\xvec;t) = \sum_{\alpha : \revsort{\alpha} = \revsort{\gamma} } c^{\tau\sigma}_{\gamma\alpha}(t) \atom^{\sigma}_\alpha(\xvec;t)
\end{align}
where the $c^{\tau\sigma}_{\gamma\alpha}(t)$
are polynomials in $t$, with the property that $c^{\tau\sigma}_{\gamma\alpha}(t) \geq 0$ whenever $0\leq t \leq 1$.
\medskip

\cref{eq:texpansion} is a generalization of the fact that key polynomials 
and permuted-basement atoms expand positively into Demazure atoms,
see \emph{e.g.} \cite{Pun2016Thesis,Mason2008}.
Letting $t=0$, we obtain the general result that whenever $\tau \geq \sigma$ in Bruhat order,
\begin{align}\label{eq:atomexpansion}
\atom^\tau_\gamma(\xvec) = \sum_{\alpha : \revsort{\alpha} = \revsort{\gamma} } c^{\tau\sigma}_{\gamma\alpha} \atom^{\sigma}_\alpha(\xvec),\qquad  \text{where}\qquad c^{\tau\sigma}_{\gamma\alpha}\in \{0,1\}. 
\end{align}
\cref{fig:positivity} below illustrates how various bases of polynomials are related 
under expansion. 
We prove the dashed relations \eqref{eq:texpansion} and \eqref{eq:atomexpansion} in this paper.
In the figure, we have the permuted-basement atoms, $\atom^{\tau}_\gamma(\xvec) \coloneqq \atom^{\tau}_\gamma(\xvec;0)$,
the key polynomials $\key_\gamma(\xvec) \coloneqq \atom^{\omega_0}_\gamma(\xvec)$
and the Demazure atoms $\atom_\gamma(\xvec) \coloneqq \atom^{\id}_\gamma(\xvec)$.
Finally, $\omega_0$ denotes the longest permutation (in $\symS_n$).

\begin{figure}[!ht]
\centering

\begin{tikzpicture}[xscale=3.6,yscale=2.3,scale=0.8, every node/.style={scale=0.8}]
\tikzset{
    vertex/.style = {
        draw,
	align=left,
% 	fill            = blue!50!white!30,
        outer sep = 2pt,
        inner sep = 3pt,
	minimum height = 1cm
    },
    pluses/.style={
	dashed, decoration={markings,
	mark=between positions 1.5pt and 1 step 6pt with {
	\draw[-] (0,1.5pt) -- (0,-1.5pt);
       }
    },
    postaction=decorate,
  },
  posExp/.style = {thick,->,black},
  superSet/.style = {double equal sign distance,-implies},
}
\node[vertex] (atom)		at ( 1, 0) {Demazure atom $\atom_\alpha(\xvec)$};
\node[vertex] (key)		at ( 1, 2) {Key $\key_\alpha(\xvec)$};
\node[vertex] (generalAtom)	at ( 1, 1) {Perm. bas. atom $\atom^\sigma_\alpha(\xvec)$};
\node[vertex] (generalTAtom)	at ( 3, 1) {Perm. bas. $t$-atom $\atom^\tau_\alpha(\xvec;t)$};
\node[vertex] (generalTAtom2)	at ( 3, 0) {Perm. bas. $t$-atom $\atom^\sigma_\alpha(\xvec;t)$};
\node[vertex] (schurS)		at ( 2, 3) {Schur $\schurS_\lambda(\xvec)$};
\node[vertex] (hallLittlewoodP)	at ( 3, 2) {Hall--Littlewood, $\hlPolyP_\lambda(\xvec;t)$};
\draw[superSet]  (schurS) to node[left] {subset} (key);
\draw[posExp,dashed]  (key) to node[left] {$\in \{0,1\}$} (generalAtom);
% \draw[posExp]  (quasiSchur) to (atom);
\draw[posExp]  (schurS) to node[right] {$\in \setN[t]$ (Kostka--Foulkes)} (hallLittlewoodP);
\draw[posExp]  (generalAtom) to node[left] {$\in \{0,1\}$} (atom);
\draw[posExp]  (hallLittlewoodP) to node[right] {$\in \{0,1,t,t^2,\dotsc\}$} (generalTAtom);
\draw[posExp,dashed]  (generalTAtom) to node[right] {$\substack{\in \setZ[t] \\ \text{pos. when } 0\leq t\leq 1}$} (generalTAtom2);
% \draw[posExp]  (schurS) to (quasiSchur);
% \draw[posExp]  (elem) to (schurS);
% \draw[posExp]  (quasiSchur) to (gesselFundamental);
\draw[posExp,bend left,looseness=2.2]  (key) to (atom);
\end{tikzpicture}

\caption{This graph shows various families of polynomials. 
The arrows indicate ``expands positively in'' which means that the coefficients are
either non-negative numbers or polynomials in $t$ with non-negative coefficients. 
Here, $\tau \geq \sigma$ in Bruhat order, and Schur polynomials should be interpreted as polynomials in $n$
variables \emph{or} symmetric functions depending on context.
}\label{fig:positivity}
\end{figure}
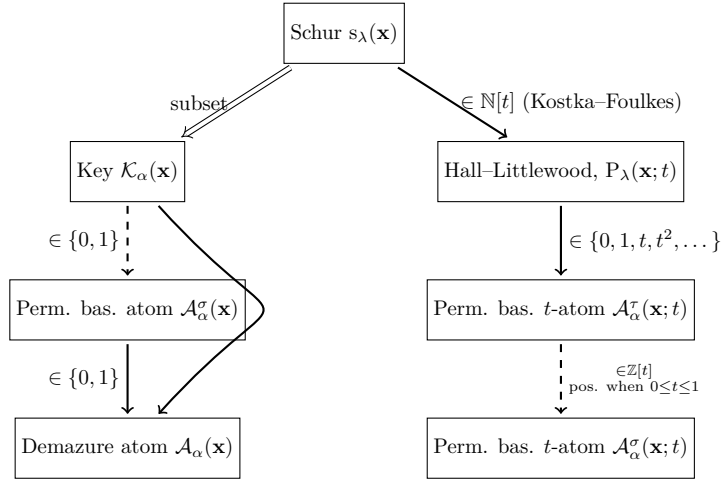
As a final corollary, by taking $\tau=\omega_0$, 
we see that key polynomials expand positively into permuted-basement Demazure atoms:
\begin{align}\label{eq:keyexpansion}
\key_\gamma(\xvec) = \sum_{\alpha : \revsort{\alpha} = \revsort{\gamma} } c^{\sigma}_{\gamma\alpha} \atom^{\sigma}_\alpha(\xvec),\qquad  \text{where}\qquad c^{\sigma}_{\gamma\alpha}\in \{0,1\}. 
\end{align}

\section{Preliminaries}

We now give the necessary background on the combinatorial model for the
permuted basement Macdonald polynomials.
The notation and some of the preliminaries is taken from \cite{Alexandersson15gbMacdonald}.

\medskip

Let $\sigma = (\sigma_1,\dots,\sigma_n)$ be a list of $n$ different positive integers and
let $\lambda=(\lambda_1,\dots,\lambda_n)$ be a \emph{weak integer composition}, that is, a vector with non-negative integer entries.
An \defin{augmented filling} of shape $\lambda$ and \defin{basement} $\sigma$
is a filling of a Young diagram of shape $(\lambda_1,\dotsc,\lambda_n)$ with positive integers,
augmented with a zeroth column filled from top to bottom with $\sigma_1,\dotsc,\sigma_n$.
Note that we use \emph{English notation} rather than the skyline fillings used in \cite{HaglundNonSymmetricMacdonald2008,Mason2009}.

\begin{definition}
Let $F$ be an augmented filling. Two boxes $a$, $b$ are \defin{attacking}
if $F(a)=F(b)$ and the boxes are either in the same column,
or they are in adjacent columns with the rightmost box in a row strictly below the other box.
\begin{align*}
\begin{ytableau}
a  \\
\none[\scriptstyle\vdots] \\
b  \\
\end{ytableau}\quad
\text{or}\quad
\begin{ytableau}
 a & \none \\
 \none[\scriptstyle\vdots] \\
  & b \\
\end{ytableau}
\end{align*}
\end{definition}
A filling is \defin{non-attacking} if there are no attacking pairs of boxes.

\begin{definition}
A \defin{triple of type $A$} is an arrangement of boxes, $a$, $b$, $c$
located such that $a$ is immediately to the left of $b$, and $c$ is somewhere below $b$,
and the row containing $a$ and $b$ is at least as long as the row containing $c$.

Similarly, a \defin{triple of type $B$} is an arrangement of boxes, $a$, $b$, $c$,
located such that $a$ is immediately to the left of $b$, and $c$ is somewhere above $a$,
and the row containing $a$ and $b$ is \emph{strictly} longer than the row containing $c$.

A type $A$ triple is an \defin{inversion triple} if the entries ordered increasingly
form a \emph{counter-clockwise} orientation. Similarly, a type $B$ triple is an inversion triple
if the entries ordered increasingly form a \emph{clockwise} orientation.
If two entries are equal, the one with the largest subscript in \cref{eq:invTriplets}
is considered to be largest.
\begin{equation}\label{eq:invTriplets}
\text{Type $A$:}\quad
\ytableausetup{centertableaux,boxsize=1.2em}
\begin{ytableau}
 a_3 & b_1 \\
 \none  & \none[\scriptstyle\vdots] \\
\none & c_2 \\
\end{ytableau}
\qquad
\text{Type $B$:}\quad
\ytableausetup{centertableaux,boxsize=1.2em}
\begin{ytableau}
c_2 & \none \\
\none[\scriptstyle\vdots]  & \none \\
a_3 & b_1 \\
\end{ytableau}
\end{equation}
\end{definition}

If $u = (i,j)$ let $d(u)$ denote $(i,j-1)$.
A \defin{descent} in $F$ is a non-basement box $u$ such that $F(d(u)) < F(u)$.
The set of descents in $F$ is denoted by $\Des(F)$.

\begin{example}
Below is a non-attacking filling of shape $(4,1,3,0,1)$ with basement $(4,5,3,2,1)$.
The bold entries are descents and the underlined entries form a type $A$ inversion triple.
There are $7$ inversion triples (of type $A$ and $B$) in total.
\[
\begin{ytableau}
\underline{4} & \underline{2} & 1 & \textbf{2} & 4\\
5 & 5\\
3 & 3 & \textbf{4} & 3\\
2\\
1 & \underline{1} \\
\end{ytableau}
\]
\end{example}

\medskip

The \defin{leg} of a box, denoted by $\leg(u)$, in an augmented diagram is the number of boxes to the right of $u$ in the diagram.
The \defin{arm} of a box $u = (r,c)$, denoted by $\arm(u)$, in an augmented diagram $\lambda$ is defined as the cardinality of
the sets
\begin{align*}
\{ (r', c) \in \lambda : r < r' \text{ and } \lambda_{r'} \leq \lambda_r \} \text{ and } \\
\{ (r', c-1) \in \lambda : r' < r \text{ and } \lambda_{r'} < \lambda_r \}.
\end{align*}

We illustrate the boxes $x$ and $y$ (in the first and second set in the union, respectively) contributing to $\arm(u)$ below.
The boxes marked $l$ contribute to $\leg(u)$.
The $\arm$ values for all boxes in the diagram are shown in the diagram on the right.
% MatrixTeXForm[#] &@(YoungTableau@ SSAFArmFilling[1 + {4, 1, 0, 5, 3, 1, 4}] /. -1 -> " ")
% Sum[Boole[c <= shape[[i]] <= shape[[r]]], {i, r + 1, rows}] +  Sum[Boole[c - 1 <= shape[[i]] < shape[[r]]], {i, 1, r - 1}];
\begin{equation*}
 \begin{ytableau}
\;  & y &   &   &  \\
  & y\\
 \\
  &   & \mathbf u & l & l  & l \\
  &   & x &  \\
  &  \\
  &   & x &   &  \\
 \end{ytableau}
\qquad\qquad
 \begin{ytableau}
\;  & 4 & 2 & 2 & 1\\
  & 1\\
 \\
  & 6 & 4 & 3 & 2 & 1\\
  & 3 & 1 & 0\\
  & 1\\
  & 4 & 3 & 1 & 1\\
 \end{ytableau}
\end{equation*}
The \defin{major index}, $\maj(F)$, of an augmented filling $F$ is given by
\begin{align*}
\maj(F) = \sum_{ u \in \Des(F) } \leg(u)+1.
\end{align*}
The \defin{number of inversions}, denoted by $\inv(F)$, of a filling is the number of inversion triples of either type.
The number of \defin{coinversions}, $\coinv(F)$, is the number of type $A$ and type $B$ triples which are \emph{not}
inversion triples.

Let $\NAF_\sigma(\lambda)$ denote all non-attacking fillings of shape $\lambda$
with basement $\sigma \in \symS_n$ and entries in $\{1,\dotsc,n\}$.
\begin{example}
The set $\NAF_{3124}(1,1,0,2)$ consists of the following augmented fillings:
\begin{align*}
\substack{\young(31,12,2,443)\\ \coinv: 1\\ \maj: 1} \quad
\substack{\young(31,12,2,444)\\ \coinv: 1\\ \maj: 1} \quad
\substack{\young(32,11,2,443)\\ \coinv: 0\\ \maj: 0} \quad
\substack{\young(32,11,2,444)\\ \coinv: 0\\ \maj: 0}\\
\substack{\young(33,11,2,442)\\ \coinv: 1\\ \maj: 0} \quad
\substack{\young(33,11,2,444)\\ \coinv: 0\\ \maj: 0} \quad
\substack{\young(33,12,2,441)\\ \coinv: 2\\ \maj: 1} \quad
\substack{\young(33,12,2,444)\\ \coinv: 0\\ \maj: 1}
\end{align*}
\end{example}

Recall, the length of a permutation, $\length(\sigma)$, is the number of inversions in $\sigma$.
We let $\omega_0$ denote the unique longest permutation in $\symS_n$.
Furthermore, given an augmented filling $F$, the \emph{weight} of $F$
is the composition $\mu_1,\mu_2,\dots,$ such that $\mu_i$ is the number of non-basement entries in $F$
that are equal to $i$. We then let $\xvec^F$ be a shorthand for the product $\prod_i x_i^{\mu_i}$.

\medskip

\begin{definition}[Combinatorial formula]
Let $\sigma \in \symS_n$ and let $\lambda$ be a weak composition with $n$ parts.
The \defin{non-symmetric permuted basement Macdonald polynomial} $\macdonaldE^\sigma_\lambda(\xvec;q,t)$ is defined as
\begin{equation}\label{eq:nonSymmetricMacdonaldBasement}
\macdonaldE^\sigma_\lambda(\xvec; q,t) \coloneqq \sum_{ F \in \NAF_\sigma(\lambda)} \xvec^F q^{\maj F} t^{\coinv F} \!\!\!
\prod_{ \substack{ u \in F \\ F(d(u))\neq F(u) }} \!\!\! \frac{1-t}{1-q^{1+\leg u} t^{1+\arm u}},
\end{equation}
where $F(d(u)) \neq F(u)$ in the product index if $u$ is a box in the basement.
\end{definition}
When $\sigma = \omega_0$, we recover
the non-symmetric Macdonald polynomials defined in \cite{HaglundNonSymmetricMacdonald2008}, $\macdonaldE_\lambda(\xvec;q,t)$.

Note that the number of variables we work over is always finite and implicit from the context.
For example, if $\sigma \in \symS_n$, then $\xvec \coloneqq (x_1,\dotsc,x_n)$ in $\macdonaldE^\sigma_\lambda(\xvec; q,t)$,
and it is understood that $\lambda$ has $n$ parts.

\subsection{Bruhat order, compositions and operators}

If $\omega \in \symS_n$ is a permutation, we can decompose $\omega$ as a product $\omega = s_1s_2\dotsm s_k$
of elementary transpositions, $s_i = (i,i+1)$. 
When $k$ is minimized, $s_1s_2\dotsm s_k$ is a \defin{reduced word} of $\omega$,
and $k$ is the \defin{length} of $\omega$, which we denote by $\length(\omega)$.

The \defin{strong order} on permutations in $\symS_n$ is a partial order
defined via the cover relations that $u$ covers $v$ if $(a,b) u = v$ and $\length(u) + 1 = \length(v)$ for some transposition $(a,b)$.
The \defin{Bruhat order} is defined in a similar fashion, where only elementary transpositions are allowed in the covering relations.
We illustrate these partial orders in \cref{fig:weakAndBruhatOrder}.

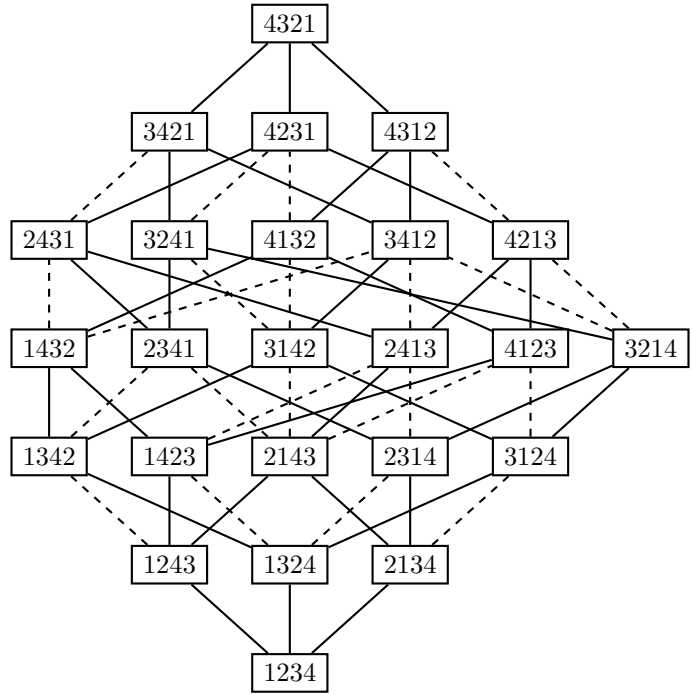
\begin{figure}[!ht]
\begin{tikzpicture}[thick,scale=0.4,yscale=-0.9,
every node/.style = {
shape= rectangle,
draw,                    %% here
minimum width  = 1cm,
minimum height = 0.5cm,
align          = center,
text           = black},
black edge/.style  = { -,
thick,
black,
shorten >= 4pt}
]
\node (n1234) at (0.,24.) {1234};
\node (n1243) at (-4.,20.) {1243};
\node (n1324) at (0.,20.) {1324};
\node (n1342) at (-8.,16.) {1342};
\node (n1423) at (-4.,16.) {1423};
\node (n1432) at (-8.,12.) {1432};
\node (n2134) at (4.,20.) {2134};
\node (n2143) at (0.,16.) {2143};
\node (n2314) at (4.,16.) {2314};
\node (n2341) at (-4.,12.) {2341};
\node (n2413) at (4.,12.) {2413};
\node (n2431) at (-8.,8.) {2431};
\node (n3124) at (8.,16.) {3124};
\node (n3142) at (0.,12.) {3142};
\node (n3214) at (12.,12.) {3214};
\node (n3241) at (-4.,8.) {3241};
\node (n3412) at (4.,8.) {3412};
\node (n3421) at (-4.,4.) {3421};
\node (n4123) at (8.,12.) {4123};
\node (n4132) at (0.,8.) {4132};
\node (n4213) at (8.,8.) {4213};
\node (n4231) at (0.,4.) {4231};
\node (n4312) at (4.,4.) {4312};
\node (n4321) at (0.,0.) {4321};
\draw(n1234)-- (n1243);
\draw(n1234)-- (n1324);
\draw(n1234)-- (n2134);
\draw[dashed] (n1243)-- (n1342);
\draw(n1243)-- (n1423);
\draw(n1243)-- (n2143);
\draw(n1324)-- (n1342);
\draw[dashed] (n1324)-- (n1423);
\draw[dashed] (n1324)-- (n2314);
\draw(n1324)-- (n3124);
\draw(n1342)-- (n1432);
\draw[dashed] (n1342)-- (n2341);
\draw(n1342)-- (n3142);
\draw(n1423)-- (n1432);
\draw[dashed] (n1423)-- (n2413);
\draw(n1423)-- (n4123);
\draw[dashed] (n1432)-- (n2431);
\draw[dashed] (n1432)-- (n3412);
\draw(n1432)-- (n4132);
\draw(n2134)-- (n2143);
\draw(n2134)-- (n2314);
\draw[dashed] (n2134)-- (n3124);
\draw[dashed] (n2143)-- (n2341);
\draw(n2143)-- (n2413);
\draw[dashed] (n2143)-- (n3142);
\draw[dashed] (n2143)-- (n4123);
\draw(n2314)-- (n2341);
\draw[dashed] (n2314)-- (n2413);
\draw(n2314)-- (n3214);
\draw(n2341)-- (n2431);
\draw(n2341)-- (n3241);
\draw(n2413)-- (n2431);
\draw[dashed] (n2413)-- (n3412);
\draw(n2413)-- (n4213);
\draw[dashed] (n2431)-- (n3421);
\draw(n2431)-- (n4231);
\draw(n3124)-- (n3142);
\draw(n3124)-- (n3214);
\draw[dashed] (n3124)-- (n4123);
\draw[dashed] (n3142)-- (n3241);
\draw(n3142)-- (n3412);
\draw[dashed] (n3142)-- (n4132);
\draw(n3214)-- (n3241);
\draw[dashed] (n3214)-- (n3412);
\draw[dashed] (n3214)-- (n4213);
\draw(n3241)-- (n3421);
\draw[dashed] (n3241)-- (n4231);
\draw(n3412)-- (n3421);
\draw(n3412)-- (n4312);
\draw(n3421)-- (n4321);
\draw(n4123)-- (n4132);
\draw(n4123)-- (n4213);
\draw[dashed] (n4132)-- (n4231);
\draw(n4132)-- (n4312);
\draw(n4213)-- (n4231);
\draw[dashed] (n4213)-- (n4312);
\draw(n4231)-- (n4321);
\draw(n4312)-- (n4321);
\end{tikzpicture}
\caption{The Bruhat order and strong order on $\symS_4$.
Permutations expressed in one-line notation and solid lines correspond to elementary transposition.}
\label{fig:weakAndBruhatOrder}
\end{figure}

\medskip 

Given a composition $\alpha$, let $\revsort{\alpha}$ be the unique integer partition where the parts of $\alpha$
has been rearranged in decreasing order. For example, $\revsort{2,0,1,4,9} = (9,4,2,1,0)$.
We can act with permutations on compositions (and partitions) by permutation of the entries:
\[
 \omega(\lambda) = (3,0,1,5) \text{ if } \omega=(2,4,3,1) \text{ and } \lambda = (5,3,1,0),
\]
where $\omega$ is given in one-line notation.

\bigskip

In order to prove the main result of this paper we rely heavily on the Knop--Sahi recurrence,
\emph{basement permuting operators}, and \emph{shape permuting operators}.
The Knop--Sahi recurrence relations for Macdonald polynomials \cite{Knop1997,Sahi1996} is given by the relation
\begin{equation}\label{eq:knopRelation}
 \macdonaldE_{\hat{\lambda}}(\xvec;q,t) = q^{\lambda_1} x_1 \macdonaldE_{\lambda}(x_2,\dotsc,x_n,q^{-1}x_1;q,t)
\end{equation}
where $\hat{\lambda} = (\lambda_2,\dotsc,\lambda_n, \lambda_1 +1)$.
Furthermore, note that the combinatorial formula implies that
\begin{align}\label{eq:factorOut}
\macdonaldE^\sigma_{(\lambda_1+1,\dotsc,\lambda_n+1)}(\xvec;q,t) = (x_1\dotsm x_n) \macdonaldE^\sigma_{(\lambda_1,\dotsc,\lambda_n)}(\xvec;q,t).
\end{align}

\medskip
We need some brief background on certain $t$-deformations of divided difference operators.
Let $s_i$ be a simple transposition on indices of variables and define
\[
 \partial_i = \frac{1-s_i}{x_i-x_{i+1}}, \quad \pi_i = \partial_i x_i, \quad \theta_i = \pi_i - 1.
\]
The operators $\pi_i$ and $\theta_i$ are used to define the key polynomials and Demazure atoms, respectively.
Now define the following $t$-deformations of the above operators:
\begin{align}\label{eq:demLuzOps}
\tpi_i(f) = (1-t)\pi_i(f) + t s_i(f)  \qquad \ttheta_i(f) = (1-t)\theta_i(f) + t s_i(f).
\end{align}
The $\ttheta_i$ are called the \defin{Demazure--Lusztig operators} generates
the affine Hecke algebra, see \emph{e.g.}~\cite{HaglundNonSymmetricMacdonald2008} (where $\ttheta_i$ correspond to $T_i$).
Note that these operators satisfy the braid relations, and that $\ttheta_i \tpi_i =  \tpi_i \ttheta_i = t$.
\begin{example}
As an example, $\ttheta_1( x_1^2 x_2 ) = (1 - t) x_1 x_2^2 + t x_1 x_2^2$.
\end{example}

With these definitions, we can now state the following two propositions which were proved in \cite{Alexandersson15gbMacdonald}:
\begin{proposition}[Basement permuting operators]\label{prop:basementPermutation}
Let $\lambda$ be a composition and let $\sigma$ be a permutation.
Furthermore, let $\gamma_i$ be the length of the row with basement label $i$, that is, $\gamma_i = \lambda_{\sigma^{-1}_i}$.

If $\length(\sigma s_i)  < \length(\sigma)$, then
\begin{align}\label{eq:atomOperatorOnBasement}
\ttheta_i \macdonaldE_\lambda^{\sigma}(\xvec;q,t) = \macdonaldE_\lambda^{\sigma s_i}(\xvec;q,t) \times
\begin{cases}
t \text{ if } \gamma_i \leq \gamma_{i+1} \\
1 \text{ otherwise.}
\end{cases}
\end{align}

Similarly, if $\length(\sigma s_i)  > \length(\sigma)$, then
\begin{align}\label{eq:keyOperatorOnBasement}
\tpi_i \macdonaldE_\lambda^{\sigma}(\xvec;q,t) = \macdonaldE_\lambda^{\sigma s_i}(\xvec;q,t) \times
\begin{cases}
 t \text{ if } \gamma_i < \gamma_{i+1} \\
 1 \text{ otherwise.}
\end{cases}
\end{align}
\end{proposition}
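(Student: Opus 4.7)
My plan is to combine the combinatorial formula \eqref{eq:nonSymmetricMacdonaldBasement} with a bijective analysis of non-attacking fillings. First, I would reduce \eqref{eq:keyOperatorOnBasement} to \eqref{eq:atomOperatorOnBasement} using the identity $\tpi_i \ttheta_i = t$ recalled in the text. Applying $\tpi_i$ to both sides of \eqref{eq:atomOperatorOnBasement}, but with $\sigma$ replaced by $\sigma s_i$ (so that the hypothesis $\length(\sigma s_i) < \length(\sigma)$ becomes the one needed for $\tpi_i$), converts $\ttheta_i \macdonaldE^{\sigma s_i}_\lambda = c \, \macdonaldE^{\sigma}_\lambda$ into $\tpi_i \macdonaldE^{\sigma}_\lambda = (t/c) \macdonaldE^{\sigma s_i}_\lambda$. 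The row-length profile for basement $\sigma s_i$ has $\gamma_i$ and $\gamma_{i+1}$ interchanged compared with that of $\sigma$, so the case $\gamma_i \leq \gamma_{i+1}$ in \eqref{eq:atomOperatorOnBasement} (after this swap) aligns exactly with the case $\gamma_i \geq \gamma_{i+1}$ in \eqref{eq:keyOperatorOnBasement}, and the resulting scalars from $\{t, 1\}$ match as required. Hence it suffices to prove \eqref{eq:atomOperatorOnBasement}.

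For \eqref{eq:atomOperatorOnBasement}, I would expand the left-hand side using \eqref{eq:nonSymmetricMacdonaldBasement} and study the action of $\ttheta_i = (1-t)(\pi_i - 1) + t s_i$ term by term. Since $\ttheta_i$ involves only the variables $x_i$ and $x_{i+1}$, and since $\maj$, $\coinv$, $\leg$, and $\arm$ depend only on the shape $\lambda$ and the relative order of entries, the task reduces to constructing an explicit bijection $\varphi \colon \NAF_\sigma(\lambda) \to \NAF_{\sigma s_i}(\lambda)$ that (i) swaps the basement labels $i$ and $i+1$; (ii) interchanges body entries $i$ and $i+1$, with a local correction to restore the non-attacking property in the affected rows; and (iii) transforms $\maj$ and $\coinv$ and the product $\prod \frac{1-t}{1-q^{1+\leg u} t^{1+\arm u}}$ in a manner compatible with the action of $\ttheta_i$ on the monomials $\xvec^F$, up to a single global scalar of $t$ or $1$.

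The main obstacle will be controlling the local correction in (ii). Right-multiplication by $s_i$ swaps the basement labels $i$ and $i+1$ wherever they occur in $\sigma$, and these positions need not lie in adjacent rows, so the two rows affected by $\varphi$ may be arbitrarily far apart; the adjustment must then coherently preserve non-attackingness across every intermediate row. The dichotomy $\gamma_i \leq \gamma_{i+1}$ versus $\gamma_i > \gamma_{i+1}$ dictates whether $\varphi$ systematically introduces one additional inversion triple of type $A$ or $B$ in each paired filling---producing the global factor $t$---or preserves the inversion count exactly, yielding $1$. Executing this bookkeeping through an explicit case analysis, further split by whether the two affected rows are adjacent or separated, is the technical heart of the argument.
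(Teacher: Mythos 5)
The paper does not actually prove this proposition: it is quoted from \cite{Alexandersson15gbMacdonald}, so there is no in-paper argument to compare with, and your attempt has to stand on its own. Its first step does: the reduction of \eqref{eq:keyOperatorOnBasement} to \eqref{eq:atomOperatorOnBasement} via $\tpi_i\ttheta_i=t$ is correct. Writing $\tau=\sigma s_i$ with $\length(\tau s_i)<\length(\tau)$, applying \eqref{eq:atomOperatorOnBasement} to $\tau$ and then $\tpi_i$ gives $\tpi_i\macdonaldE^{\sigma}_\lambda=(t/c)\macdonaldE^{\sigma s_i}_\lambda$, and since passing from $\sigma$ to $\sigma s_i$ interchanges $\gamma_i$ and $\gamma_{i+1}$, the scalars $t/c$ land exactly as in \eqref{eq:keyOperatorOnBasement}.

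The gap is in the second step, which is where all the content lies: \eqref{eq:atomOperatorOnBasement} is never proved, and the mechanism you propose cannot work as stated. You ask for a bijection $\varphi\colon\NAF_\sigma(\lambda)\to\NAF_{\sigma s_i}(\lambda)$ that is compatible with $\ttheta_i$ ``up to a single global scalar of $t$ or $1$,'' but $\ttheta_i$ is not a monomial operator: $\ttheta_i(x_i^a x_{i+1}^b)$ is a sum of up to $|a-b|+1$ monomials with $t$-dependent coefficients, so the image of a single term $\xvec^F q^{\maj F}t^{\coinv F}\prod(1-t)/(1-q^{1+\leg u}t^{1+\arm u})$ is not of the form (scalar)$\times$(one term of the target sum), and no filling-by-filling pairing can intertwine the two sides. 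The standard granularity for such arguments is coarser: one groups the fillings of $\NAF_\sigma(\lambda)$ into classes (for instance by fixing all data except the entries equal to $i$ or $i+1$, or by reducing to the two affected rows via \cref{thm:partialSymmetry}-type partial symmetry) and proves an identity of rational functions in $q,t$ on each class, summing a geometric-type contribution against the action of $\ttheta_i$. Moreover, swapping the basement labels $i$ and $i+1$ changes descent status of first-column boxes (hence $\maj$), the non-attacking constraints against column $1$, and which boxes are excluded from the product by the condition $F(d(u))\neq F(u)$, so the needed correction is not confined to ``the affected rows'' in the way you suggest. What you defer to ``bookkeeping through an explicit case analysis'' is precisely the theorem; as it stands, your proposal establishes only that the two displayed identities are equivalent to one another, not that either one holds.
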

Consequently, we see that $\tpi_i$ and  $\ttheta_i$ move the basement up and down, respectively, in the Bruhat order.

\begin{proposition}[Shape permuting operators]\label{prop:thetaShapeTrans}
If $\lambda_j < \lambda_{j+1}$, $\sigma_j = i+1$ and $\sigma_{j+1} =i$ for some $i$, $j$,
then
\begin{equation}\label{eq:macdonaldShapeTrans}
\macdonaldE^\sigma_{s_j \lambda}(\xvec; q, t) = \left( \ttheta_i + \frac{1-t}{1-q^{1+\leg u}t^{\arm u}} \right) \macdonaldE^\sigma_{\lambda}(\xvec; q, t),
\end{equation}
where  $u = (j+1, \lambda_{j}+1)$ in the diagram of shape $\lambda$.
\end{proposition}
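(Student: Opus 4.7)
My plan is to reduce \eqref{eq:macdonaldShapeTrans} via \cref{prop:basementPermutation} to a three-term identity among permuted-basement Macdonald polynomials, and then verify that identity through a weight- and statistic-preserving decomposition of non-attacking fillings, with induction on $\sigma$ as a fallback.

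The hypothesis $\sigma_j=i+1$, $\sigma_{j+1}=i$ means that the labels $i,i+1$ form an inversion of $\sigma$, so $\length(\sigma s_i)<\length(\sigma)$. Writing $\gamma_k=\lambda_{\sigma^{-1}_k}$ as in \cref{prop:basementPermutation}, we have $\gamma_i=\lambda_{j+1}$ and $\gamma_{i+1}=\lambda_j$, and the hypothesis $\lambda_j<\lambda_{j+1}$ places us in the ``otherwise'' branch of \eqref{eq:atomOperatorOnBasement}. Thus $\ttheta_i\macdonaldE^\sigma_\lambda=\macdonaldE^{\sigma s_i}_\lambda$, and \eqref{eq:macdonaldShapeTrans} is equivalent to
\[
\macdonaldE^\sigma_{s_j\lambda}(\xvec;q,t)=\macdonaldE^{\sigma s_i}_\lambda(\xvec;q,t)+\frac{1-t}{1-q^{1+\leg u}t^{\arm u}}\,\macdonaldE^\sigma_\lambda(\xvec;q,t).
\]
Note the pleasant coincidence that in both $\macdonaldE^\sigma_{s_j\lambda}$ and $\macdonaldE^{\sigma s_i}_\lambda$ the basement label $i+1$ is attached to the longer row (of length $\lambda_{j+1}$) and $i$ to the shorter row (of length $\lambda_j$); only the top/bottom position of the long row differs between the two polynomials.

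To establish the three-term identity, I would build a correspondence from $\NAF_\sigma(s_j\lambda)$ to $\NAF_{\sigma s_i}(\lambda)\sqcup\NAF_\sigma(\lambda)$, splitting each filling according to the content of the ``extra'' cell $(j,\lambda_j+1)$ of row $j$ (present in $s_j\lambda$ but not in $\lambda$). When this entry is compatible with swapping rows $j$ and $j+1$ harmlessly, the filling should land in $\NAF_{\sigma s_i}(\lambda)$ with weights and statistics preserved; otherwise, collapsing the extra cell should yield a filling in $\NAF_\sigma(\lambda)$ together with a local factor which, after summing over the allowed entries in the extra column, telescopes to $(1-t)/(1-q^{1+\leg u}t^{\arm u})$ for $u=(j+1,\lambda_j+1)$.

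The main obstacle is the bookkeeping for the statistics $\inv$, $\coinv$, $\maj$, and particularly for the arm function: swapping two rows of different lengths changes $\arm(v)$ for every cell $v$ in columns strictly between $\lambda_j+1$ and $\lambda_{j+1}$, and any triple whose third box lies in a row of intermediate length can flip type A $\leftrightarrow$ B under $s_j$. The non-attacking condition combined with the adjacent basement labels $i,i+1$ should force these local changes to cancel against each other, but the case analysis is delicate. Should the direct bijection prove too intricate, I would fall back on induction on $\length(\omega_0)-\length(\sigma)$: the base case $\sigma=\omega_0$ is the classical Cherednik--Knop--Sahi intertwining identity for ordinary non-symmetric Macdonald polynomials, and \eqref{eq:keyOperatorOnBasement} of \cref{prop:basementPermutation} provides the inductive step for moving down in Bruhat order.
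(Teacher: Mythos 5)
Your opening reduction is correct and worth keeping: since $\sigma_j=i+1$, $\sigma_{j+1}=i$ gives $\length(\sigma s_i)<\length(\sigma)$, and $\gamma_i=\lambda_{j+1}>\lambda_j=\gamma_{i+1}$ puts you in the factor-$1$ branch of \eqref{eq:atomOperatorOnBasement}, the claim \eqref{eq:macdonaldShapeTrans} is indeed equivalent to the three-term identity $\macdonaldE^\sigma_{s_j\lambda}=\macdonaldE^{\sigma s_i}_\lambda+\tfrac{1-t}{1-q^{1+\leg u}t^{\arm u}}\macdonaldE^\sigma_\lambda$. Note, however, that the paper does not prove this proposition at all: it is imported from \cite{Alexandersson15gbMacdonald}, so everything after your reduction is the actual content, and that is where the proposal has genuine gaps. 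Your primary route --- a splitting of $\NAF_\sigma(s_j\lambda)$ into $\NAF_{\sigma s_i}(\lambda)\sqcup\NAF_\sigma(\lambda)$ --- is only an aspiration: each filling in \eqref{eq:nonSymmetricMacdonaldBasement} carries a \emph{product of rational factors} $\tfrac{1-t}{1-q^{1+\leg}t^{1+\arm}}$ over its cells, and permuting two rows of different lengths changes $\arm$ (hence these factors) for many cells at once, so ``weights and statistics preserved'' and the asserted telescoping to $\tfrac{1-t}{1-q^{1+\leg u}t^{\arm u}}$ are exactly the hard points; you acknowledge the case analysis is delicate and do not carry it out, so no proof is given.

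The fallback induction also fails as described. From the base case $\sigma=\omega_0$ the pair $(i+1,i)$ sits at the fixed positions $(n-i,n-i+1)$, whereas a general basement satisfying the hypothesis has it at an arbitrary position $j$. Basement-permuting operators $\tpi_k,\ttheta_k$ with $k\notin\{i-1,i,i+1\}$ never move the values $i,i+1$, so they cannot change $j$; already for $n=4$, $i=2$ there is no admissible $k$ at all, and basements such as $(3,2,1,4)$ or $(1,4,3,2)$ are unreachable from $\omega_0$ this way. To change $j$ you are forced to use $\tpi_{i-1}$ or $\tpi_{i+1}$ (or $\tpi_i$ itself), which do not commute with the $\ttheta_i$ appearing in \eqref{eq:macdonaldShapeTrans}, and which may multiply the three terms of the identity by different powers of $t$ via \eqref{eq:keyOperatorOnBasement}; moreover the target identity then concerns a different transposition $s_{j'}$, a different box $u$, and different $\arm$/$\leg$ values, so it is not obtained from the old one by applying an operator to both sides. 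In the cases where $k$ is remote from $i$ the transport does work (row lengths for labels $k,k+1$ agree in $\lambda$ and $s_j\lambda$, so the $t$-factors match), but that only covers a small subfamily of basements. So as written, neither route establishes the proposition; you would either have to do the filling-level bookkeeping in full (essentially redoing the argument of \cite{Alexandersson15gbMacdonald}) or find a way to handle the non-commuting basement moves, e.g.\ via the Hecke-algebra relations satisfied by the $\ttheta_k$, which your sketch does not address.
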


Note that these formulas together with the Knop--Sahi recurrence uniquely define the Macdonald polynomials recursively,
with the initial condition that for the empty composition, $\macdonaldE_{0\dotsc 0}(\xvec;q,t)=1$.

Finally, we will need the following result from \cite{Alexandersson15gbMacdonald}:
\begin{theorem}[Partial symmetry]\label{thm:partialSymmetry}
Suppose $\alpha_j = \alpha_{j+1}$ and $\{\sigma_j, \sigma_{j+1} \}$ take the values $\{i, i+1\}$ for some $j$, $i$,
then $\macdonaldE^\sigma_\alpha (\xvec;q,t)$ is symmetric in $x_i, x_{i+1}$.
\end{theorem}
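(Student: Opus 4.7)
The plan is to apply the basement-permuting operators of \cref{prop:basementPermutation} to convert the symmetry claim into an identity between two permuted-basement Macdonald polynomials, and then to establish that identity via a statistic-preserving bijection on non-attacking fillings.

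First, I would assume without loss of generality that $\sigma_j = i$ and $\sigma_{j+1} = i+1$; the reverse case is analogous, with $\ttheta_i$ playing the role of $\tpi_i$. Under the hypothesis $\alpha_j = \alpha_{j+1}$, the row lengths $\gamma_i$ and $\gamma_{i+1}$ from \cref{prop:basementPermutation} coincide and $\length(\sigma s_i) > \length(\sigma)$. Since $\gamma_i$ is not \emph{strictly} less than $\gamma_{i+1}$, the prefactor in the $\tpi_i$-case of that proposition is $1$, giving
\[
\tpi_i \macdonaldE^\sigma_\alpha(\xvec;q,t) = \macdonaldE^{\sigma s_i}_\alpha(\xvec;q,t).
\]
Because $\tpi_i = (1-t)\pi_i + t\, s_i$ and both $\pi_i$ and $s_i$ act as the identity on polynomials symmetric in $x_i, x_{i+1}$, the relation $\tpi_i f = f$ is equivalent to $s_i f = f$. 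Hence the partial symmetry claimed by the theorem reduces to the combinatorial identity
\begin{equation}\label{eq:partSymReduction}
\macdonaldE^\sigma_\alpha(\xvec;q,t) = \macdonaldE^{\sigma s_i}_\alpha(\xvec;q,t).
\end{equation}

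Second, I would establish \eqref{eq:partSymReduction} by exhibiting a bijection $\phi\colon \NAF_\sigma(\alpha) \to \NAF_{\sigma s_i}(\alpha)$ that preserves the monomial weight $\xvec^F$, the statistics $\maj$ and $\coinv$, and the $\leg$/$\arm$ data at each cell contributing to the HHL product. The naive ``swap every $i$ with an $i+1$'' is already a bijection between the two sets of non-attacking fillings, since non-attacking depends only on value equality and the basements $\sigma$ and $\sigma s_i$ differ only by exchanging $i$ and $i+1$ at rows $j$ and $j+1$. This literal swap, however, does not preserve $\maj$: a pair with an $i$ directly above an $i+1$ (not a descent) becomes an $i+1$ above an $i$ (a descent). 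The genuine $\phi$ must therefore include a compensating rearrangement, naturally obtained by standardizing the $\{i, i+1\}$-sub-alphabet, swapping those labels inside the standardization, and destandardizing.

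The main obstacle is this second step: constructing $\phi$ explicitly and verifying cell-by-cell that it preserves $\maj$, $\coinv$, and the $\arm$/$\leg$ factors of the HHL product. A swap at one location propagates through the non-attacking structure, so local descent changes must be cancelled globally. This technical bijection is the content of the corresponding proof in \cite{Alexandersson15gbMacdonald}, which I would invoke here rather than reproduce.
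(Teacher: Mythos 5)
The paper itself offers no proof of \cref{thm:partialSymmetry}: it is imported verbatim from \cite{Alexandersson15gbMacdonald}, so there is no internal argument to compare against. Your first step is fine and is consistent with how the paper handles such reductions elsewhere: with $\sigma_j=i$, $\sigma_{j+1}=i+1$ and $\alpha_j=\alpha_{j+1}$ we have $\gamma_i=\gamma_{i+1}$, so \cref{prop:basementPermutation} gives $\tpi_i\macdonaldE^{\sigma}_\alpha(\xvec;q,t)=\macdonaldE^{\sigma s_i}_\alpha(\xvec;q,t)$ with no factor of $t$, and the criterion that $f$ is symmetric in $x_i,x_{i+1}$ if and only if $\tpi_i f=f$ (the HHL fact the paper itself invokes in the proof of \cref{lem:onecolumn}; your one-line justification only yields the easy direction, but the equivalence is standard) reduces the theorem to the identity $\macdonaldE^{\sigma}_\alpha(\xvec;q,t)=\macdonaldE^{\sigma s_i}_\alpha(\xvec;q,t)$.

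The genuine gap is that this identity, which by \cref{prop:basementPermutation} is exactly equivalent to the theorem, is never proved. The ``standardize, swap, destandardize'' map is only gestured at: it is not defined, and nothing is checked about how it interacts with the weight, the non-attacking condition relative to the new basement, the column-one descents (which are measured against basement entries), or the $\arm$/$\leg$ factors in the product of \eqref{eq:nonSymmetricMacdonaldBasement}. You also blur which identity $\phi$ is meant to prove: the literal value-swap you call ``naive'' sends the weight $x_i^a x_{i+1}^b$ to $x_i^b x_{i+1}^a$, so even if it preserved all statistics it would establish $\macdonaldE^{\sigma s_i}_\alpha = s_i\,\macdonaldE^{\sigma}_\alpha$ rather than the weight-preserving identity displayed in your proposal (either version suffices together with the operator relation, but they require different bijections); conversely, the weight-preserving option of keeping the filling and swapping only the basement labels fails the non-attacking condition (an entry $i+1$ in column one of row $j+1$ is legal for $\sigma$ but attacks the basement of $\sigma s_i$). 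Finally, deferring the missing bijection to \cite{Alexandersson15gbMacdonald} is, in effect, just re-deriving the citation: that is the very reference from which the theorem is quoted, and you have no guarantee the proof there takes the bijective form you describe. To have a proof rather than a reformulation, you would need to construct and verify such a bijection explicitly, or prove the basement-swap identity by other means.
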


\section{A basement invariance}

In this section, we prove bijectively that whenever $\lambda$ is a partition,
we have $\macdonaldE_\lambda^\sigma(\xvec;1,0) = \elementaryE_{\lambda'}(\xvec)$.
Note that this is \emph{independent} of the basement $\sigma$, which at a first glance might be surprising.

\begin{lemma}
Let $D$ be a diagram of shape $2^m 1^n$, where the first column has fixed distinct entries in $\setN$.
Furthermore, if $S\subseteq \setN$ be a set of $m$ integers
then there is a unique way of placing the entries in $S$ into the second column of $D$
such that the resulting filling has no coinversions.
\end{lemma}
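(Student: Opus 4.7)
The plan is to fill the second column row by row, from top to bottom, and argue that the entry in each row is forced by the constraints coming from the rows below. First, identify which triples are relevant: in the shape $2^m 1^n$, no type~$B$ triple involving column~$2$ can occur because all rows among the first $m$ have length~$2$, violating the strict-length requirement; and the last $n$ rows have length~$1$ and therefore contain no column-$2$ cell. The only triples touching column~$2$ are consequently the type~$A$ triples $(a, b, c) = (x_i, y_i, y_{i'})$ with $1 \le i < i' \le m$, where $x_i$ denotes the fixed column-$1$ entry of row~$i$ and $y_i, y_{i'}$ are the column-$2$ entries.

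A direct case analysis, using the subscript convention $a_3, b_1, c_2$ to handle ties, shows that such a triple is an inversion precisely when one of the following holds: (i) $y_i > x_i$ and $x_i < y_{i'} < y_i$; (ii) $y_i < x_i$ and $y_{i'} \notin [y_i, x_i]$; or (iii) $y_i = x_i$, in which case the triple is automatically an inversion for every $y_{i'} \neq x_i$. In particular, the tie $a = c$ (that is, $y_{i'} = x_i$ with $y_i \neq x_i$) always produces a coinversion, so any no-coinversion filling is automatically non-attacking.

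Now induct on $i = 1, \dotsc, m$. Having chosen $y_1, \dotsc, y_{i-1}$, set $S_i = S \setminus \{y_1, \dotsc, y_{i-1}\}$; the set of yet-to-be-placed column-$2$ values is exactly $S_i$. The requirement that every triple with top row~$i$ be an inversion translates, via the three cases above, into a set-inclusion condition on $S_i \setminus \{y_i\}$ that depends only on $x_i$ and $y_i$, and examining the three alternatives yields a trichotomy that forces $y_i$ uniquely: if $x_i \in S_i$, then $y_i = x_i$ (otherwise $x_i$ itself would lie in $S_i \setminus \{y_i\}$ but outside the required interval in cases (i) and (ii)); otherwise, if $S_i$ contains some element less than $x_i$, then $y_i = \max\{s \in S_i : s < x_i\}$; and otherwise $y_i = \max(S_i)$. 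By construction the chosen $y_i$ satisfies the row-$i$ constraint for \emph{any} subsequent distribution of $S_i \setminus \{y_i\}$ in the rows below, so the constraints attached to earlier rows are automatically preserved as the induction proceeds.

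The main subtlety lies in the tie case $y_i = x_i$: one must check carefully, using the subscript convention, that the resulting triple is indeed an inversion for every $y_{i'} \neq x_i$ (both when $y_{i'} < x_i$ and when $y_{i'} > x_i$), and symmetrically that no value $y_i \neq x_i$ drawn from $S_i$ can work when $x_i \in S_i$. Once these facts are established, iterating the rule down to $i = m$ produces a unique filling in which every triple is an inversion, yielding both existence and uniqueness.
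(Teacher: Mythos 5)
Your proof is correct and follows essentially the same route as the paper: a greedy top-to-bottom filling of the second column where each entry is forced, your trichotomy ($y_i=x_i$ if $x_i\in S_i$, else the largest remaining element below $x_i$, else the maximum) being exactly the paper's rule of taking the largest available element $\leq L(C)$ when one exists and the maximum otherwise. The explicit classification of which type~$A$ triples are inversions, and the observation that no type~$B$ triples occur, simply spells out what the paper leaves as ``straightforward to verify.''
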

\begin{proof}
We provide an algorithm for filling in the second column of the diagram. 
Begin by letting $C$ be the topmost box in the second column and let $L(C)$ to be the box to the left of $C$. 
In order to pick an entry for $C$, we do the following:

If there is an element in $S$ which is less 
than or equal to $L(C)$, remove it from $S$ let it be the value of $C$.

Otherwise, remove the maximal element in $S$ 
and let this be the value of $C$. 

Iterate this procedure for the remaining entries in the second column while moving $C$ downwards.
It is straightforward to verify that the result is coinversion-free and that every choice 
for the element in second column is forced.
\end{proof}

\begin{corollary}
If $\lambda$ is a partition with at most $n$ parts and $\sigma \in \symS_n$, then 
\[
\macdonaldE_\lambda^\sigma(\xvec;1,0) = \elementaryE_{\lambda'}(\xvec).
\] 
\end{corollary}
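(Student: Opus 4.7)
The plan is to evaluate the Haglund--Haiman--Loehr combinatorial formula \eqref{eq:nonSymmetricMacdonaldBasement} at $q=1$, $t=0$, where the expression collapses dramatically. At $t=0$ each factor $(1-t)/(1-q^{1+\leg u}t^{1+\arm u})$ in the product reduces to $1$, the statistic $q^{\maj F}$ becomes $1$, and $t^{\coinv F}$ kills every summand except those with $\coinv F = 0$. Hence
\[
\macdonaldE^\sigma_\lambda(\xvec;1,0) \;=\; \sum_{\substack{F \in \NAF_\sigma(\lambda) \\ \coinv F = 0}} \xvec^F,
\]
and the task reduces to enumerating the coinversion-free non-attacking fillings and showing that they give $\elementaryE_{\lambda'}(\xvec)$.

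Next I would exploit the partition hypothesis in two ways. First, since $\lambda_1 \geq \lambda_2 \geq \dotsb$, a type $B$ triple would require the row of $c$ (lying above $a$) to be strictly shorter than the row of $a,b$, contradicting the partition condition; hence only type $A$ triples can create coinversions, and every such triple is supported on two adjacent columns. Second, the column lengths $\lambda'_1 \geq \lambda'_2 \geq \dotsb$ are weakly decreasing, so each pair of consecutive columns sits in a shape $2^m 1^n$ to which the preceding lemma applies directly.

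The bijective heart of the argument is then straightforward. To a coinversion-free non-attacking filling $F$ I associate the tuple $(S_1,\dotsc,S_{\lambda_1})$, where $S_i \subseteq \{1,\dotsc,n\}$ is the set of entries in column $i$; this is well defined because non-attacking forbids repeats within a column, and $|S_i| = \lambda'_i$. Conversely, given such a tuple, I would start with the basement $\sigma$ as column $0$ and iteratively apply the previous lemma to fill column $i$ uniquely from $S_i$ in a coinversion-free way, using the fact that column $i-1$ has distinct entries by induction. Summing $\xvec^F = \prod_i \prod_{j \in S_i} x_j$ over all tuples then yields $\prod_i \elementaryE_{\lambda'_i}(\xvec) = \elementaryE_{\lambda'}(\xvec)$.

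The main obstacle I expect is verifying that the iteratively constructed filling is not merely coinversion-free but also non-attacking, since the lemma guarantees only the former. Same-column repetitions are ruled out immediately by $|S_i| = \lambda'_i$. For an adjacent-column attacking pair --- an entry $L_r$ in column $k-1$ equal to some $C_{r'}$ in column $k$ with $r' > r$ --- I would inspect the type $A$ triple $(L_r, C_r, C_{r'})$ and verify, using the tiebreak convention of \eqref{eq:invTriplets} whereby equal entries are ranked $b_1 < c_2 < a_3$ in priority, that the three values never fall into one of the three cyclic orders characterising an inversion triple, regardless of the value of $C_r$. Thus coinversion-freeness alone forces non-attacking, and the bijection is valid in both directions, completing the proof.
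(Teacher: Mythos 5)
Your proposal is correct and takes essentially the same route as the paper: specialize the combinatorial formula at $q=1$, $t=0$ so that only coinversion-free non-attacking fillings survive, then apply the preceding two-column lemma inductively, column by column (starting from the basement), to put these fillings in bijection with choices of column sets, whose generating function is $\elementaryE_{\lambda'}(\xvec)$. Your explicit verification that, for partition shapes, coinversion-freeness together with distinct column entries already forces the non-attacking condition (via the tie-break on type $A$ triples, and the absence of type $B$ triples) supplies a detail the paper's proof leaves implicit.
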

\begin{proof}
Fix a basement $\sigma$ and choose sets of elements for each of 
the remaining columns. 
Note that all such choices are in natural correspondence with the monomials whose sum is $\elementaryE_{\lambda'}$.
By applying the previous lemma inductively column by column,
it follows that there is a unique filling with the the specified column sets.
The combinatorial formula now implies 
that $\macdonaldE_\lambda^\sigma(\xvec;1,0) = \elementaryE_{\lambda'}(\xvec)$ as desired.
\end{proof}
We use a similar approach to give bijections among families of coinversion-free fillings 
of general composition shapes in \cite{AlexanderssonSawhney17}.

\begin{example}
Here are the nine fillings associated with $\macdonaldE_{210}^{132}(\xvec;1,0)$.
In other words, it is the set of non-attacking, coinversion-free fillings of shape $(2,1,0)$
and basement $132$.
\[
\begin{ytableau}
{1} & 1 & 1\\
{3} & 2\\
{2}\\
\end{ytableau}
\,
\begin{ytableau}
{1} & 1 & 1\\
{3} & 3\\
{2}\\
\end{ytableau}
\,
\begin{ytableau}
{1} & 1 & 2\\
{3} & 2\\
{2}\\
\end{ytableau}
\,
\begin{ytableau}
{1} & 1 & 2\\
{3} & 3\\
{2}\\
\end{ytableau}
\,
\begin{ytableau}
{1} & 1 & 3\\
{3} & 2\\
{2}\\
\end{ytableau}
\,
\begin{ytableau}
{1} & 1 & 3\\
{3} & 3\\
{2}\\
\end{ytableau}
\,
\begin{ytableau}
{1} & 3 & 1\\
{3} & 2\\
{2}\\
\end{ytableau}
\,
\begin{ytableau}
{1} & 3 & 2\\
{3} & 2\\
{2}\\
\end{ytableau}
\,
\begin{ytableau}
{1} & 3 & 3\\
{3} & 2\\
{2}\\
\end{ytableau}
\]
The sum of the weights is $x_1^2x_2 + x_1^2x_3 + \dotsb + x_2x_3^2 = \elementaryE_{210}(\xvec)$.
\end{example}

\section{The factorization property}

Let $\lambda$ be a composition. The \emph{weak standardization} of $\lambda$, denoted by $\tilde{\lambda}$,
is the lex-smallest composition such that for all $i$, $j$, we have
\[
\lambda_i \leq \lambda_j \Rightarrow \tilde{\lambda}_i \leq \tilde{\lambda}_j.
\]
For example, $\lambda = (6,2,5,2,3,3)$ gives $\tilde{\lambda} = (3,0,2,0,1,1)$.

\begin{lemma}\label{lem:onecolumn}
If $\lambda = 1^m0^n$, then $\macdonaldE^{\sigma}_{\lambda}(\xvec;1,t) = e_m(\xvec).$
\end{lemma}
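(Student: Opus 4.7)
My plan is to reduce the lemma to the previous corollary (which handles the case $t=0$) and then extend the identity to general $t$ by a pointwise cancellation at $q=1$. Since $\lambda=1^m 0^n$ has only one non-basement column, every non-attacking filling $F\in\NAF_\sigma(\lambda)$ is specified by an $m$-subset $S\subseteq\{1,\dots,n+m\}$ (the values appearing in column~1) together with a bijection between the $m$ occupied rows and $S$. The monomial $\xvec^F=\prod_{v\in S}x_v$ depends only on $S$. Moreover, for this shape one computes $\leg((r,1))=0$ and $\arm((r,1))=m-r$, and the only triples in the sense of \eqref{eq:invTriplets} are type~A triples indexed by pairs $r<r'\le m$. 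Collecting terms by content, the lemma reduces to the pointwise identity
\[
\sum_{\substack{F\in\NAF_\sigma(\lambda)\\ \mathrm{content}(F)=S}} t^{\coinv F}\prod_{\substack{u\in F\\ F(d(u))\neq F(u)}} \frac{1-t}{1-t^{m+1-r(u)}} \;=\; 1
\]
for every $m$-subset $S$, where $r(u)$ denotes the row of $u$. Summing over $S$ then yields $e_m(\xvec)$.

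At $t=0$ this identity is precisely the content of the previous corollary: only the unique coinversion-free filling (supplied by the preceding lemma) survives and contributes~$1$, while all other fillings are killed by $t^{\coinv F}$. For general $t$ I would argue by induction on $m$, peeling off the top row. Fixing the top-row entry $v=F((1,1))\in S$, the remaining rows form a non-attacking filling of shape $1^{m-1}0^{n+1}$ over a suitably modified basement (obtained from $\sigma$ by discarding the first basement row and shifting the others), and the inductive hypothesis collapses the sum over residual fillings to~$1$. The remaining sum over choices of $v$ should then telescope via the geometric identity $\sum_{k=0}^{m-1}(1-t)t^k = 1-t^m$, which exactly matches the top-row Macdonald factor $\frac{1-t}{1-t^m}$.

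The main obstacle is the bookkeeping needed to align the top-row sum with this geometric series: as $v$ ranges over the allowed top-row values (sorted increasingly), the row~$1$ contribution to $\coinv F$ and the descent correction at $(1,1)$ must combine to produce the successive terms $1,t,t^2,\dots,t^{m-1}$. If direct enumeration proves delicate, an alternative is a sign-reversing involution: pair each non-coinversion-free filling with a companion obtained by swapping two adjacent column entries, chosen so that exactly one type~A triple toggles its inversion status; the two paired weights then differ by an explicit factor built from $(1-t)$ and $(1-t^{m+1-r})$ that produces the cancellation, again relying crucially on the $q=1$ simplification $\arm((r,1))=m-r$.
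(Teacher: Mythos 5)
Your reduction of the lemma to a pointwise identity is sound, and it is a genuinely different route from the paper's (which never works with the combinatorial formula directly, but instead combines \cref{thm:partialSymmetry} with the basement-permuting operators of \cref{prop:basementPermutation} and a single basement-swap bijection). For $\lambda=1^m0^n$ one indeed has $\leg((r,1))=0$, $\arm((r,1))=m-r$, only type~$A$ triples (indexed by pairs $r<r'\le m$), and a monomial depending only on the content $S$. The gap is in the inductive step, which is where the real content lies. First, your telescoping tacitly assumes that every choice of top entry $v$ carries the factor $\frac{1-t}{1-t^{m}}$; this fails when $v=\sigma_1$, since then $F(d(u))=F(u)$ at the top box and no such factor appears. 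Worse, when $\sigma_1\in S$ the non-attacking condition forces $v=\sigma_1$ (every lower box of the column attacks the basement entry $\sigma_1$), so the sum over $v$ collapses to a single term and there is no geometric series at all; that case needs the separate (easy) observation that every row-one triple with $a=b=\sigma_1$ is an inversion under the tie-breaking convention, so the top row contributes weight exactly $1$ and induction applies directly.

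Second, in the remaining case $\sigma_1\notin S$, the $t$-exponent contributed by the row-one triples for a fixed $v$ equals the number of $c\in S\setminus\{v\}$ with $(\sigma_1,v,c)$ in clockwise orientation, i.e.\ $c$ in the cyclic interval from $v$ to $\sigma_1$; the exponents $0,1,\dots,m-1$ therefore arise by listing $S$ in cyclic order starting just above $\sigma_1$, not by sorting $S$ increasingly (when $\sigma_1$ is larger than every element of $S$ the increasing order gives the exponents reversed, for instance). Only the multiset of exponents matters, so the identity $\sum_{k=0}^{m-1}(1-t)t^k=1-t^m$ does close the induction, but this cyclic-order statement is exactly the bookkeeping you defer, and it is asserted rather than proved; your fallback of a ``sign-reversing involution'' is not clearly meaningful here, since for $0\le t<1$ all weights are nonnegative and the cancellation really is the geometric identity applied to that cyclic statistic. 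Two smaller corrections: deleting the top row leaves shape $1^{m-1}0^{n}$, not $1^{m-1}0^{n+1}$, and the residual fillings agree with $\NAF_{(\sigma_2,\dots,\sigma_{m+n})}(1^{m-1}0^{n})$ only because the leftover constraint ``avoid $\sigma_1$'' is vacuous when $\sigma_1\notin S$ and automatic when $v=\sigma_1$. With these points supplied your argument is complete and yields a self-contained combinatorial proof, in contrast to the paper's operator-theoretic one.
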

\begin{proof}
We begin by showing this statement for $\sigma = \text{id}$.

Using \cref{thm:partialSymmetry},
we have that $\macdonaldE^{\text{id}}_{\lambda}(\xvec;1,t)$ is symmetric in $x_1$, \ldots, $x_m$ and symmetric in
$x_{m+1}$, \ldots, $x_{m+n}$.
Furthermore, using the combinatorial formula,
we can easily see that there is exactly one non-attacking filling of weight $\lambda$.
This filling has major index $0$. In other words,
\[
 [\xvec^\lambda] \macdonaldE^{\text{id}}_{\lambda}(\xvec;1,t) = 1.
\]
It is therefore enough show that the polynomial is symmetric in $x_m$ and $x_{m+1}$.
A result in \cite{HaglundNonSymmetricMacdonald2008}
implies that a polynomial $f$ is symmetric in $x_m,x_{m+1}$ if and only if $\tpi_m(f)=f$.
Hence, it suffices to show
\begin{align}\label{eq:piSymmetry}
 \tpi_m \macdonaldE^{\text{id}}_{\lambda}(\xvec;1,t) = \macdonaldE^{\text{id}}_{\lambda}(\xvec;1,t).
\end{align}
\cref{prop:basementPermutation} gives that
\[
\tpi_m \macdonaldE^{\text{id}}_{\lambda}(\xvec;1,t) = \macdonaldE^{s_m}_{\lambda}(\xvec;1,t).
\]
Hence, it remains to show that $\macdonaldE^{\text{id}}_{\lambda}(\xvec;1,t) = \macdonaldE^{s_m}_{\lambda}(\xvec;1,t)$.
We do this by exhibiting a weight-preserving bijection between fillings of shape $\lambda$ with identity basement,
and those with $s_m$ as basement. 
The bijection is given by simply permuting the basement labels in row $m$ and $m+1$,
since both coinversions and the non-attacking condition are preserved, so the result is a valid filling.
Finally, since $\arm(u)=0$ for the box in position $(m,1)$, it is straightforward to verify 
that the weight is preserved under this map.
\medskip 

The statement for general $\sigma$ now follows by applying the basement permuting operators $\tpi_i$ repeatedly
on both sides of the identity $\macdonaldE^{\sigma}_{\lambda}(\xvec;1,t) = e_m(\xvec)$.
The right hand side is unchanged since these operators preserve symmetric functions.
\end{proof}

\bigskip 

We say that $\lambda \leq \mu$ in the Bruhat order if there is a sequence of transpositions, $s_{i_1}\dotsm s_{i_k}$
such that $s_{i_1}\dotsm s_{i_k} \lambda = \mu$ and where each application of a transposition increases the number of inversions.
\begin{lemma}\label{lem:reduction} 
If $\lambda$ and $\mu$ are compositions such that $\lambda \leq \mu$ in the Bruhat order,
then the following implication holds:
\begin{align}
  \frac{ \macdonaldE^{w_0}_{\lambda}(\xvec;1, t) }{ \macdonaldE^{w_0}_{\tilde{\lambda}}(\xvec;1,t) } = F_\lambda(\xvec)
  \qquad \Longrightarrow \qquad
  \frac{ \macdonaldE^{w_0}_{\mu}(\xvec;1, t) }{ \macdonaldE^{w_0}_{\tilde{\mu}}(\xvec;1,t) } = F_\lambda(\xvec)
\end{align}
where $F_\lambda(\xvec)$ is any function symmetric in $x_1,\dotsc,x_n$.
\end{lemma}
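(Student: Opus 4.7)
The plan is to induct on the length of a Bruhat chain $\lambda = \lambda^{(0)} \lessdot \lambda^{(1)} \lessdot \cdots \lessdot \lambda^{(k)} = \mu$, so it suffices to establish the implication in the cover case $\mu = s_j\lambda$, which forces $\lambda_j < \lambda_{j+1}$. The two ingredients are the shape-permuting operator from \cref{prop:thetaShapeTrans}, specialized to $\sigma = w_0$ and $q=1$, and a product rule for $\ttheta_i$ against a factor that is symmetric in $x_i, x_{i+1}$.

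First I would record two elementary properties of the weak standardization, both immediate from the observation that $\tilde{\lambda}_i$ is just the rank of $\lambda_i$ among the distinct values occurring in $\lambda$: one has $\widetilde{s_j\lambda} = s_j\tilde{\lambda}$, and every strict or weak inequality between entries of $\lambda$ is preserved by $\tilde{\cdot}$. In particular $\lambda_j < \lambda_{j+1}$ implies $\tilde{\lambda}_j < \tilde{\lambda}_{j+1}$, so \cref{prop:thetaShapeTrans} applies to $\tilde{\lambda}$ as well. With $\sigma = w_0$, so that $\sigma_j = n-j+1$ and $\sigma_{j+1} = n-j$, the proposition at $q=1$ yields
\[
\macdonaldE^{w_0}_{s_j\lambda}(\xvec;1,t) = \Bigl(\ttheta_{n-j} + \tfrac{1-t}{1-t^{\arm u}}\Bigr)\macdonaldE^{w_0}_{\lambda}(\xvec;1,t),
\]
where $u = (j+1, \lambda_j+1)$ sits in the diagram of $\lambda$, together with the analogous identity for $\tilde{\lambda}$ using the box $u' = (j+1, \tilde{\lambda}_j+1)$.

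The key observation is that $\arm u = \arm u'$. Indeed, inspection of the definition shows the arm at $u$ is determined purely by the comparisons between $\lambda_j,\lambda_{j+1}$ and the other $\lambda_{r'}$, and these are preserved by weak standardization. Also $\arm u \geq 1$ because row $j$ itself contributes to the second set in the arm definition, so the coefficient $\tfrac{1-t}{1-t^{\arm u}}$ is a bona fide polynomial in $t$. Consequently the same operator carries $\macdonaldE^{w_0}_\lambda$ to $\macdonaldE^{w_0}_{s_j\lambda}$ and $\macdonaldE^{w_0}_{\tilde{\lambda}}$ to $\macdonaldE^{w_0}_{s_j\tilde{\lambda}}$.

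Finally, a short direct computation from $\pi_i(fg) = f\pi_i(g)$ and $s_i(fg) = f s_i(g)$ shows $\ttheta_i(fg) = f\,\ttheta_i(g)$ whenever $f$ is symmetric in $x_i, x_{i+1}$. Taking $f = F_\lambda$ (symmetric in all of $x_1,\dotsc,x_n$) and applying the common operator above to both sides of the hypothesis $\macdonaldE^{w_0}_{\lambda} = F_\lambda\cdot\macdonaldE^{w_0}_{\tilde{\lambda}}$, the factor $F_\lambda$ slides through the operator and produces
\[
\macdonaldE^{w_0}_{s_j\lambda}(\xvec;1,t) = F_\lambda(\xvec)\cdot\macdonaldE^{w_0}_{s_j\tilde{\lambda}}(\xvec;1,t) = F_\lambda(\xvec)\cdot\macdonaldE^{w_0}_{\widetilde{s_j\lambda}}(\xvec;1,t),
\]
closing the induction. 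The main obstacle is really just the bookkeeping to confirm that the operator transforming $\lambda$ into $s_j\lambda$ is literally the same as the one transforming $\tilde{\lambda}$ into $s_j\tilde{\lambda}$; once the $\arm$-invariance and the symmetric-factor product rule are in hand, the rest is formal.
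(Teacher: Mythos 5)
Your proposal is correct and takes essentially the same route as the paper's proof: reduce to a single inversion-increasing simple transposition, apply the shape-permuting operator of \cref{prop:thetaShapeTrans} at $q=1$ to both sides, observe that $\arm(u)\geq 1$ has the same value for $\lambda$ and $\tilde{\lambda}$ (so the operator is literally the same), and slide the symmetric factor $F_\lambda$ through $\ttheta_i$; your explicit checks of $\widetilde{s_j\lambda}=s_j\tilde{\lambda}$ and of the operator index $i=n-j$ for $\sigma=w_0$ are details the paper leaves implicit. One cosmetic nit: $\tfrac{1-t}{1-t^{\arm u}}$ is a rational function rather than a polynomial in $t$; all that is needed (and what $\arm(u)\geq 1$ guarantees) is that it is well defined.
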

\begin{proof}
It suffices to show the implication for any simple transposition, $s_i \lambda = \mu$ that increases the number of inversions.
Suppose that
\begin{align*}
  \macdonaldE^{w_0}_{\lambda}(\xvec;1, t) = F_\lambda(\xvec) \macdonaldE^{w_0}_{\tilde{\lambda}}(\xvec;1,t) 
\end{align*}
for some composition $\lambda$.
By \cref{prop:thetaShapeTrans}, we note that the shape permuting operator is the same on both sides for $q=1$.
That is, for any composition $\lambda$ with $\lambda_i < \lambda_{i+1}$ we have
\begin{align*}
\left( \ttheta_i + \frac{1-t}{1-t^{\arm u}} \right) \macdonaldE^{w_0}_{\lambda}(\xvec;1, t) = \macdonaldE^{w_0}_{s_i\lambda}(\xvec;1,t)
\end{align*}
and
\begin{align*}
\left( \ttheta_i + \frac{1-t}{1-t^{\arm u}} \right) F_\lambda(\xvec) \macdonaldE^{w_0}_{\tilde{\lambda}}(\xvec;1, t) = F_\lambda(\xvec)\macdonaldE^{w_0}_{s_i\tilde{\lambda}}(\xvec;1, t),
\end{align*}
where $\arm(u)\geq 1$ has the same value in both diagrams $\lambda$ and $\tilde{\lambda}$. 

%Also note that $\arm(u)\geq 1$, so that the operator is defined.
\end{proof}

\medskip 

To simplify typesetting of the upcoming proofs, we will sometimes use the notation
\begin{align}\label{eq:compositionNotation}
\macdonaldE[ (a_1)^{b_1}, \dotsc, (a_k)^{b_k} ] \coloneqq \macdonaldE^{w_0}_\lambda(\xvec;1,t) 
\end{align}
where $\lambda$ is the composition 
\[
 (\underbrace{a_1,\dotsc,a_1}_{b_1},\underbrace{a_2,\dotsc,a_2}_{b_2},\dotsc, \underbrace{a_k,\dotsc,a_k}_{b_k}).
\]

\begin{lemma}\label{lem:multicolumn}
We have the identity
\[
\frac{  \macdonaldE[ (1)^{b_1},(2)^{b_2}, \dotsc, (k)^{b_k},(0)^{b_0} ] }{  \macdonaldE[ (0)^{b_1},(1)^{b_2}, \dotsc, (k-1)^{b_k},(0)^{b_0}] }
=e_{b_1+\ldots+b_k}(\xvec).
\]
\end{lemma}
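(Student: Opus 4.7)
The plan is to prove \cref{lem:multicolumn} by induction on $k$. The base case $k = 1$ is exactly \cref{lem:onecolumn}: since $\mu = 0^n$ gives $\macdonaldE^{w_0}_\mu(\xvec;1,t) = 1$, the claimed ratio identity reduces to $\macdonaldE^{w_0}_{(1^{b_1}, 0^{b_0})}(\xvec;1,t) = e_{b_1}(\xvec)$, which is precisely that lemma.

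For the inductive step, I would first dispose of the sub-case $b_0 = 0$. Here $\lambda = (1^{b_1}, 2^{b_2}, \ldots, k^{b_k})$ has all positive entries, and $\mu = (0^{b_1}, 1^{b_2}, \ldots, (k-1)^{b_k})$ equals $\lambda$ with $1$ subtracted from every entry. A direct application of the factor-out identity \eqref{eq:factorOut} then gives
\[
\macdonaldE^{w_0}_\lambda(\xvec;1,t) = (x_1 \cdots x_n)\,\macdonaldE^{w_0}_\mu(\xvec;1,t) = e_n(\xvec)\,\macdonaldE^{w_0}_\mu(\xvec;1,t),
\]
where $n = b_1 + \cdots + b_k$, matching the claim when $b_0 = 0$.

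To extend to $b_0 > 0$, I plan to use the Knop--Sahi recurrence \eqref{eq:knopRelation}, which at $q = 1$ simplifies to $\macdonaldE_{\hat{\alpha}}(\xvec;1,t) = x_1\,\macdonaldE_\alpha(x_2, \ldots, x_n, x_1;1,t)$. The key observation is that the claimed factor $e_{b_1+\cdots+b_k}(\xvec)$ is fully symmetric and hence invariant under the cyclic variable permutation appearing in Knop--Sahi. Consequently, an identity $\macdonaldE^{w_0}_\alpha = F \cdot \macdonaldE^{w_0}_\beta$ with $F$ symmetric, applied in parallel to $\alpha$ and $\beta$, propagates to $\macdonaldE^{w_0}_{\hat\alpha} = F \cdot \macdonaldE^{w_0}_{\hat\beta}$. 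I would then chain Knop--Sahi transformations so as to connect the target identity to one already established by either the $b_0 = 0$ sub-case or the inductive hypothesis.

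The main obstacle is identifying the correct Knop--Sahi chain: each application cyclically rotates entries with a $+1$ increment rather than removing trailing zeros directly, so the bookkeeping of variable substitutions and composition shapes is delicate. Should this route prove too cumbersome, an alternative is to invoke \cref{lem:reduction}, which propagates symmetric-function ratios $\macdonaldE^{w_0}_\gamma/\macdonaldE^{w_0}_{\tilde\gamma} = F$ through Bruhat moves while preserving $F$; one would start from an instance guaranteed by the $b_0 = 0$ case and find a suitable Bruhat chain reaching $\lambda$ and $\mu$. Either route reduces the $b_0 > 0$ identity to a case that is already settled.
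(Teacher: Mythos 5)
Your setup is fine as far as it goes: the base case via \cref{lem:onecolumn}, the $b_0=0$ case via \eqref{eq:factorOut}, and the observation that a symmetric ratio propagates through parallel applications of the Knop--Sahi recurrence (since $F(x_2,\dotsc,x_n,x_1)=F(\xvec)$) are all correct, and the last point is indeed used in the paper. The gap is the inductive step itself: you defer exactly the part that carries the content, namely ``identifying the correct Knop--Sahi chain,'' and in fact no such chain exists. At $q=1$ the recurrence \eqref{eq:knopRelation} preserves the number of parts, never decreases any entry, and always deposits an entry $\geq 1$ at the end of the composition. Consequently it can never remove the trailing block $(0)^{b_0}$ (the last entry of the target is $0$, so the target is not in the image of a single Knop--Sahi step, nor can forward steps from the target ever end in $0$), it cannot lower the maximal part from $k$ to $k-1$, and it cannot reach the $b_0=0$ instance, which lives in $b_1+\dotsb+b_k<n$ variables. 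Even combining it with \eqref{eq:factorOut} only cycles you back to the pair you started from after a full rotation, yielding a tautology.

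The fallback you mention is closer to what is actually needed, but \cref{lem:reduction} does not apply verbatim: it concerns the ratio $\macdonaldE^{w_0}_{\lambda}/\macdonaldE^{w_0}_{\tilde\lambda}$, whereas here the denominator is not the weak standardization of the numerator (the numerator is already weakly standardized), so one must redo its argument for the present pair, checking via \cref{prop:thetaShapeTrans} that the relevant $\arm$ value is the same in both diagrams so that the same operator acts in parallel on numerator and denominator. The paper's proof interleaves precisely these two tools: first a shape-permuting (Bruhat) transfer moving the block $(0)^{b_0}$ next to $(1)^{b_1}$, reducing to the pair $\macdonaldE[(1)^{b_1},(0)^{b_0},(2)^{b_2},\dotsc,(k)^{b_k}]$ over $\macdonaldE[(0)^{b_0+b_1},(1)^{b_2},\dotsc,(k-1)^{b_k}]$; then $b_2+\dotsb+b_k$ parallel Knop--Sahi steps, reducing to $\macdonaldE[(1)^{b_2},\dotsc,(k-1)^{b_k},(1)^{b_1},(0)^{b_0}]$ over $\macdonaldE[(0)^{b_2},\dotsc,(k-2)^{b_k},(0)^{b_0+b_1}]$; then a second shape-permuting transfer merging $(1)^{b_1}$ with $(1)^{b_2}$, which is the $k-1$ instance with multiplicities $(b_1+b_2,b_3,\dotsc,b_k;b_0)$, handled by induction. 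Without supplying this (or an equivalent) explicit chain of shape-permuting transfers between the Knop--Sahi steps, your induction does not close.
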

\begin{proof}
We prove this lemma by induction on $k$, where the base case $k=1$ is given by \cref{lem:onecolumn}. 
For $k >1$, by \cref{prop:thetaShapeTrans} and a similar reasoning as in \cref{lem:reduction},
it is  enough to prove that 
\[
\frac{  \macdonaldE[ (1)^{b_1},(0)^{b_0},(2)^{b_2}, \dotsc, (k)^{b_k} ]     }{  \macdonaldE[ (0)^{b_0+b_1},(1)^{b_2}, \dotsc, (k-1)^{b_k}] }
=e_{b_1+\ldots+b_k}(\xvec).
\]
Furthermore, through repeated application of the Knop--Sahi recurrence \cref{eq:knopRelation} it suffices to prove
\[
\frac{  \macdonaldE[ (1)^{b_2}, \dotsc, (k-1)^{b_k},(1)^{b_1},(0)^{b_0}]     }{  \macdonaldE[ (0)^{b_2}, \dotsc, (k-2)^{b_k},(0)^{b_0+b_1}] }
=e_{b_1+\ldots+b_k}(\xvec).
\] 
Again using \cref{prop:thetaShapeTrans}, we reduce the above to the $k-1$ case
\[
\frac{  \macdonaldE[ (1)^{b_1+b_2}, \dotsc, (k-1)^{b_k},(0)^{b_0}]     }{  \macdonaldE[ (0)^{b_1+b_2}, \dotsc, (k-2)^{b_k},(0)^{b_0}] }
=e_{b_1+\ldots+b_k}(\xvec),
\] 
which is true by induction.
\end{proof}

\begin{proposition}\label{prop:fractionIsElementary}
If $\lambda$ is a composition, then
\begin{align}\label{eq:symFraction}
  \frac{ \macdonaldE^{w_0}_{\lambda}(\xvec;1, t) }{ \macdonaldE^{w_0}_{\tilde{\lambda}}(\xvec;1,t) } = F_\lambda(\xvec)
\end{align}
where $F_\lambda(\xvec)$ is an elementary symmetric polynomial.
\end{proposition}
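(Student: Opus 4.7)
The plan is to prove this by induction on $|\lambda|$, using Lemma \ref{lem:reduction} to select a convenient Bruhat representative and combining Lemma \ref{lem:multicolumn} with the factor-out identity \eqref{eq:factorOut}. By Lemma \ref{lem:reduction}, the ratio is Bruhat-class invariant, so I replace $\lambda$ by the representative $[(v_1)^{b_1}, \ldots, (v_k)^{b_k}, (0)^{b_0}]$ with distinct positive values $v_1 < v_2 < \ldots < v_k$; the weak standardization is then $\tilde{\lambda} = [(1)^{b_1}, \ldots, (k)^{b_k}, (0)^{b_0}]$. The base case $\lambda = \tilde{\lambda}$ gives $F_\lambda = 1$, an empty product of elementary symmetric polynomials.

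For the inductive step with $b_0 = 0$, I would apply \eqref{eq:factorOut} to obtain $\macdonaldE^{w_0}_\lambda = (x_1 \cdots x_n)\,\macdonaldE^{w_0}_{\lambda - (1)^n}$ and similarly $\macdonaldE^{w_0}_{\tilde{\lambda}} = (x_1 \cdots x_n)\,\macdonaldE^{w_0}_{\tilde{\lambda} - (1)^n}$. Since no zero entries are present in $\lambda$, one checks that the weak standardization $\widetilde{\lambda - (1)^n}$ coincides with $\tilde{\lambda} - (1)^n$; the factors of $x_1 \cdots x_n$ then cancel and the ratio in question equals the corresponding ratio for $\lambda - (1)^n$, which is a product of elementary symmetric polynomials by the inductive hypothesis since $|\lambda - (1)^n| < |\lambda|$.

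When $b_0 > 0$, the factor-out identity is unavailable and the strategy is to mimic the blueprint of the proof of Lemma \ref{lem:multicolumn}: apply Knop--Sahi rotations \eqref{eq:knopRelation} together with the Bruhat-type shape-permuting moves behind Lemma \ref{lem:reduction} in order to massage $\lambda$ into a form where a multicolumn-style reduction peels off an elementary symmetric factor $e_m(\xvec)$, leaving a composition of strictly smaller size, at which point induction closes the argument.

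The main obstacle is that Lemma \ref{lem:multicolumn} is stated only for consecutive values $\{0, 1, \ldots, k\}$, whereas the general case has values $\{0, v_1, \ldots, v_k\}$ with possible gaps; bridging this requires a careful orchestration of \eqref{eq:factorOut}, \eqref{eq:knopRelation}, and Lemma \ref{lem:multicolumn} that mirrors the latter's own inductive proof but handles the additional value gaps, verifying along the way that each reduction step really does contribute a single elementary symmetric factor so that the accumulated $F_\lambda$ remains a product of $e_m$'s.
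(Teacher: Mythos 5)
Your overall blueprint is close to the paper's (induction on $|\lambda|$, Bruhat reduction via \cref{lem:reduction}, the identity \eqref{eq:factorOut} when all parts are positive, Knop--Sahi plus \cref{lem:multicolumn} for the rest), but two steps are flawed and the decisive one is missing. First, the Bruhat reduction is used in the wrong direction: \cref{lem:reduction} only transfers the statement \emph{upward} in Bruhat order, so one must establish it for the Bruhat-minimal, i.e.\ weakly increasing, rearrangement (zero block first); your representative $[(v_1)^{b_1},\dotsc,(v_k)^{b_k},(0)^{b_0}]$ lies strictly above the weakly increasing arrangement whenever $b_0>0$, so proving the identity for it says nothing about the compositions below or incomparable to it --- the lemma is not a two-way ``class invariance.'' (The paper instead inducts on the number of inversions and reduces \emph{down} to the weakly increasing case.) Second, in your $b_0=0$ step the cancellation is incorrect: $\tilde\lambda$ always has minimal part $0$, so \eqref{eq:factorOut} cannot be applied to the denominator, and $\widetilde{\lambda-(1)^n}=\tilde\lambda$, not $\tilde\lambda-(1)^n$. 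The correct conclusion is that the ratio equals $e_n(\xvec)$ times the ratio for $\lambda-(1)^n$; your claim that the two ratios coincide already fails for $\lambda=(1,1)$, where the ratio is $x_1x_2$ while the ratio for $(0,0)$ is $1$.

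More importantly, the case with a zero block and non-consecutive values --- which you correctly identify as the main obstacle --- is exactly the part you leave as an intention, and it is the heart of the paper's proof. After rotating with \eqref{eq:knopRelation} so that the zero block moves to the end and every nonzero part drops by one, the paper splits the resulting ratio as
\[
\frac{\macdonaldE[(a_1-1)^{b_1},\dotsc,(a_k-1)^{b_k},(0)^{b_0}]}{\macdonaldE[(1)^{b_1},(2)^{b_2},\dotsc,(k)^{b_k},(0)^{b_0}]}\cdot
\frac{\macdonaldE[(1)^{b_1},(2)^{b_2},\dotsc,(k)^{b_k},(0)^{b_0}]}{\macdonaldE[(0)^{b_1},(1)^{b_2},\dotsc,(k-1)^{b_k},(0)^{b_0}]},
\]
valid when $a_1\geq 2$ (when $a_1=1$ the rotated ratio is already of the inductive form). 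The first factor is again ``composition over its weak standardization'' of strictly smaller size, hence covered by the induction, and the second factor is precisely \cref{lem:multicolumn}. Without this splitting (or an equivalent device), ``mirror the proof of \cref{lem:multicolumn} while handling the value gaps'' restates the difficulty rather than resolving it, so the proposal as written does not prove the proposition.
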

\begin{proof}
We prove the proposition by induction on $|\lambda|$ and the number of inversions of $\lambda$. 
Note that the result is trivial if $|\lambda| \leq 1$.

Given $\lambda$, there are several cases to consider:

\noindent 
\textit{Case 1: $\min_i\lambda_i \geq 1$.}
The result follows by inductive hypothesis on the size of the composition
by using \cref{eq:factorOut} in the numerator.

\noindent 
\textit{Case 2: $\lambda$ is not weakly increasing.}
We can reduce this case to a composition with fewer inversions using \cref{lem:reduction}.

\noindent 
\textit{Case 3: $\lambda$ is weakly increasing.} 
It is enough to prove that 
\[
\frac{ \macdonaldE[ (a_0)^{b_0}, \dotsc, (a_k)^{b_k} ]  }{ \macdonaldE[ (0)^{b_0}, \dotsc, (k)^{b_k} ]  } 
\]
is an elementary symmetric polynomial where $0=a_0<a_1<a_2<\dotsb$. 
Using the cyclic shift relation \eqref{eq:knopRelation} in the numerator and denominator,
it suffices to show that 
\begin{align}\label{eq:fractionToFactor}
\frac{  \macdonaldE[ (a_1-1)^{b_1},(a_2-1)^{b_2}, \dotsc, (a_k-1)^{b_k}, (0)^{b_0} ]     }{  \macdonaldE[ (0)^{b_1},(1)^{b_2}, \dotsc, (k-1)^{b_k}, (0)^{b_0}] }
\end{align}
is an elementary symmetric polynomial. 
If $a_1=1$, the result follows by the inductive hypothesis on the size of the composition. 
Otherwise, by rewriting \cref{eq:fractionToFactor}, it is enough to prove that
\begin{align*}
\frac{  \macdonaldE[ (a_1-1)^{b_1},(a_2-1)^{b_2}, \dotsc, (a_k-1)^{b_k}, (0)^{b_0} ]     }{  \macdonaldE[ (1)^{b_1},(2)^{b_2}, \dotsc, (k)^{b_k}, (0)^{b_0}] }\cdot\frac{  \macdonaldE[ (1)^{b_1},(2)^{b_2}, \dotsc, (k)^{b_k}, (0)^{b_0} ]     }{  \macdonaldE[ (0)^{b_1},(1)^{b_2}, \dotsc, (k-1)^{b_k}, (0)^{b_0}] }
\end{align*}
is an elementary symmetric polynomial. 
The first fraction is an elementary symmetric polynomial by induction, since it is of the right form with a smaller size.
According to \cref{lem:reduction}, the second fraction is also an elementary symmetric polynomial.
\end{proof}

\begin{theorem}\label{thm:mainFactor}
If $\lambda$ is a composition and $\sigma \in \symS_n$, then
 \[
  \frac{ \macdonaldE^{\sigma}_\lambda(\xvec;1,t) }{ \macdonaldE^{\sigma}_{\tilde{\lambda}}(\xvec;1,t) } = F_\lambda(\xvec),
 \]
where $F_\lambda(\xvec)$ is an elementary symmetric polynomial independent of $t$. 
\end{theorem}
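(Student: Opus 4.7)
The plan is to bootstrap \cref{prop:fractionIsElementary}, which treats $\sigma = w_0$, to general $\sigma$ by descending induction on $\length(\sigma)$, transporting the factorization one simple transposition at a time via the basement permuting operators of \cref{prop:basementPermutation}. Specifically, supposing
\[
\macdonaldE^\sigma_\lambda(\xvec;1,t) = F_\lambda(\xvec)\, \macdonaldE^\sigma_{\tilde\lambda}(\xvec;1,t)
\]
for some $\sigma$, I would deduce the same identity (with the \emph{same} $F_\lambda$) for $\sigma s_i$ whenever $\length(\sigma s_i) < \length(\sigma)$. Since every permutation lies below $w_0$ in Bruhat order, iterating such steps reaches every basement.

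For the induction step, I would apply $\ttheta_i$ to both sides. The left-hand side becomes, by \cref{prop:basementPermutation}, the scalar multiple $c(\lambda)\,\macdonaldE^{\sigma s_i}_\lambda(\xvec;1,t)$ where $c(\lambda) \in \{1, t\}$ is determined by whether $\gamma_i \leq \gamma_{i+1}$ for the row lengths $\gamma_j = \lambda_{\sigma^{-1}(j)}$. To process the right-hand side I need the commutation $\ttheta_i(F_\lambda g) = F_\lambda\,\ttheta_i(g)$ when $F_\lambda$ is symmetric in $x_i, x_{i+1}$; I would include a short verification: from $\partial_i = (1-s_i)/(x_i - x_{i+1})$ and $s_i$-invariance of $F_\lambda$ one checks $\partial_i(x_i F_\lambda g) = F_\lambda\,\partial_i(x_i g)$, which gives the claim for $\pi_i = \partial_i x_i$ and trivially for $s_i$, and hence for $\ttheta_i$ by linearity. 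So the right-hand side becomes $c(\tilde\lambda)\,F_\lambda(\xvec)\,\macdonaldE^{\sigma s_i}_{\tilde\lambda}(\xvec;1,t)$.

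The crux is then the identity $c(\lambda) = c(\tilde\lambda)$. This is exactly where weak standardization is the correct notion: its defining property $\lambda_a \leq \lambda_b \Leftrightarrow \tilde\lambda_a \leq \tilde\lambda_b$ (for all $a, b$) implies that $\lambda_{\sigma^{-1}(i)} \leq \lambda_{\sigma^{-1}(i+1)}$ holds if and only if $\tilde\lambda_{\sigma^{-1}(i)} \leq \tilde\lambda_{\sigma^{-1}(i+1)}$. Thus $c(\lambda) = c(\tilde\lambda)$, a nonzero element of $\setZ[t]$, which cancels in the integral domain $\setZ[t,x_1,\dotsc,x_n]$, yielding the desired factorization for $\sigma s_i$ and closing the induction.

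The main obstacle I anticipate is bookkeeping rather than a genuinely hard step: recording carefully that $\ttheta_i$ commutes with multiplication by any polynomial symmetric in $x_i, x_{i+1}$, since the asymmetric factor $x_i$ inside $\pi_i$ makes this slightly less immediate than for $\partial_i$ alone. Once that commutation is in hand, the matching $c(\lambda) = c(\tilde\lambda)$ falls directly out of the definition of $\tilde\lambda$, and the remainder is formal manipulation with \cref{prop:basementPermutation} and \cref{prop:fractionIsElementary}.
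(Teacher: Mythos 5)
Your proposal is correct and follows essentially the same route as the paper: start from \cref{prop:fractionIsElementary} for the basement $w_0$, apply the basement-permuting operators of \cref{prop:basementPermutation} to descend to arbitrary $\sigma$, and observe that the possible extra factors of $t$ agree on both sides because $\lambda_a \leq \lambda_b$ if and only if $\tilde\lambda_a \leq \tilde\lambda_b$. Your explicit verification that $\ttheta_i$ commutes with multiplication by an $s_i$-invariant polynomial simply makes precise a step the paper leaves implicit.
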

\begin{proof}
From \cref{prop:fractionIsElementary}, we have that
\[
 \macdonaldE^{w_0}_\lambda(\xvec;1,t) = F_\lambda(\xvec) \macdonaldE^{w_0}_{\tilde{\lambda}}(\xvec;1,t)
\]
where $F_\lambda$ is an elementary symmetric polynomial. 
Applying basement-permuting operators from \cref{prop:basementPermutation} on both sides then gives 
\[
  \macdonaldE^\sigma_\lambda(\xvec;1,t) = F_\lambda(\xvec) \macdonaldE^\sigma_{\tilde{\lambda}}(\xvec;1,t).
\]
Note that applying a basement-permuting operator might give an extra factor of $t$,
but since $\lambda_{i} \leq \lambda_{j}$ if and only if $\tilde{\lambda_{i}}\leq \tilde{\lambda_{j}}$,
these extra factors cancel.
\end{proof}

We are now ready to prove the following surprising identity,
which was first observed through computational evidence by J. Haglund and the first author.
\begin{theorem}\label{mainresult}
If $\lambda$ is a \emph{partition} and $\sigma \in \symS_n$, then
 \[
  \macdonaldE^\sigma_\lambda(\xvec;1,t) = \macdonaldE^\sigma_{{\lambda}}(\xvec;1,0)=e_{\lambda'}(\xvec). 
 \]
\end{theorem}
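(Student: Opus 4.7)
The plan is to combine \cref{thm:mainFactor} with the corollary at the end of Section~3, reducing the claim to a residual case handled by induction using \cref{lem:onecolumn}, \cref{lem:multicolumn}, and \cref{prop:thetaShapeTrans}.

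First, by \cref{thm:mainFactor} we have
\[
\macdonaldE^\sigma_\lambda(\xvec;1,t) = F_\lambda(\xvec)\,\macdonaldE^\sigma_{\tilde{\lambda}}(\xvec;1,t),
\]
with $F_\lambda$ an elementary symmetric polynomial independent of $t$. Since $\lambda$ is a partition, its weak standardization $\tilde\lambda$ is also a partition: it has the same multiplicities of distinct parts as $\lambda$, with distinct values shifted to $\{0,1,\dotsc,k-1\}$, where $k$ is the number of distinct parts.

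Next, I would specialize to $t=0$. The corollary at the end of Section~3 applies to both partitions and yields $\elementaryE_{\lambda'}(\xvec) = F_\lambda(\xvec)\cdot \elementaryE_{\tilde\lambda'}(\xvec)$, so $F_\lambda(\xvec) = \elementaryE_{\lambda'}(\xvec)/\elementaryE_{\tilde\lambda'}(\xvec)$. This ratio is visibly a product of elementary symmetric polynomials, since the parts of $\tilde\lambda'$ are precisely the truncated tail of $\lambda'$ (the ``extra'' columns of $\lambda$ collapsed in the standardization). Substituting back, the theorem is equivalent to the remaining claim $\macdonaldE^\sigma_{\tilde\lambda}(\xvec;1,t) = \elementaryE_{\tilde\lambda'}(\xvec)$ for partitions $\tilde\lambda$ whose distinct parts are exactly $\{0,1,\dotsc,k-1\}$.

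For this remaining claim I would induct on $k$. The case $k \le 2$ (where $\tilde\lambda$ has shape $1^m\,0^{n-m}$) is exactly \cref{lem:onecolumn}. For the inductive step $k\ge 3$, apply the shape-permuting operator of \cref{prop:thetaShapeTrans} together with the Knop--Sahi recurrence \cref{eq:knopRelation} to relate $\macdonaldE^{w_0}_{\tilde\lambda}$ to Macdonald polynomials of compositions to which \cref{lem:multicolumn} and the inductive hypothesis apply---very much in the spirit of the proofs of \cref{lem:multicolumn} and \cref{prop:fractionIsElementary}.

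The main obstacle will be the reduction step itself: at $q=1$ the intermediate Macdonald polynomials can genuinely be rational in $t$, because the factor $\frac{1-t}{1-q^{1+\leg u}t^{\arm u}}$ from \cref{prop:thetaShapeTrans} fails to be polynomial for $\arm u \ge 2$. One must therefore arrange the sequence of basement-, Knop--Sahi-, and shape-permuting operations so that these rational factors cancel cleanly when the composition ultimately assembled is the partition $\tilde\lambda$. Once this cancellation is secured, the theorem follows by substituting the $t$-independent value $\elementaryE_{\tilde\lambda'}(\xvec)$ into the displayed factorization.
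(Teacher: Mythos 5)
Your opening reduction is sound, and it is in fact a slightly different route than the paper takes: since $F_\lambda$ in \cref{thm:mainFactor} is independent of $t$, evaluating at $t=0$ and invoking the bijective corollary of Section~3 for the two partitions $\lambda$ and $\tilde{\lambda}$ does pin down $F_\lambda = e_{\lambda'}(\xvec)/e_{(\tilde{\lambda})'}(\xvec)$ (the paper only records this formula as a corollary \emph{after} \cref{mainresult}). But notice what this buys: the statement you are left with, $\macdonaldE^{\sigma}_{\tilde{\lambda}}(\xvec;1,t)=e_{(\tilde{\lambda})'}(\xvec)$, is exactly \cref{mainresult} for the standardized partition $\tilde{\lambda}$. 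So all of the actual content of the theorem --- the $t$-independence --- is still to be proved, merely restricted to partitions whose distinct parts are $0,1,\dotsc,k-1$.

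For that residual claim your proposal stops precisely where the work lies. You say the inductive step should combine \cref{prop:thetaShapeTrans}, the Knop--Sahi recurrence \eqref{eq:knopRelation} and \cref{lem:multicolumn} ``in the spirit of'' the earlier proofs, and then you flag the factors $\tfrac{1-t}{1-t^{\arm u}}$ as an obstacle to be arranged away, with the conclusion deferred to ``once this cancellation is secured.'' That is an acknowledgement that the step is not proved. The missing idea is the one used throughout the paper's Section on factorization: never apply a shape-permuting operator to a single polynomial, but always to both sides of an identity of the form $\macdonaldE_{\text{shape}} = e_m\,\macdonaldE_{\text{shape with a column removed}}$, after checking that the pivot box $u$ has the \emph{same} arm value in both diagrams (as in \cref{lem:reduction}); then the operator acts with identical scalar factors on both sides and no rational normalization ever needs to be tracked. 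Concretely, the paper proves by induction on $k$ the column-removal identity
\[
\macdonaldE[ (k)^{b_0},\dotsc,(0)^{b_k} ] \;=\; e_{b_0+\dotsb+b_{k-1}}(\xvec)\,\macdonaldE[ (k-1)^{b_0},\dotsc,(0)^{b_{k-1}+b_k} ],
\]
by cycling the block of longest rows to the bottom with \cref{prop:thetaShapeTrans} applied in parallel to numerator and denominator, then using \eqref{eq:knopRelation} (at $q=1$ this is multiplication by $x_1$ together with a rotation of the variables, again applied to both sides) to merge the top two blocks and drop from $k$ to $k-1$; iterating this display and finishing with \cref{lem:onecolumn} yields $e_{\lambda'}(\xvec)$ for every partition, standardized or not. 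Until you formulate such a two-sided identity and verify that the arm values, hence the operator factors, agree at every transposition you use, your induction on $k$ is a plan rather than a proof.
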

\begin{proof}
It is enough to prove that $\macdonaldE^{w_0}_\lambda(\xvec;1,t)=e_{\lambda'}(\xvec)$
as the more general statement follows from using \cref{prop:basementPermutation}.

By using the previous theorem, it is enough to prove that
\[
\frac{  \macdonaldE[ (k)^{b_0},\dotsc, (0)^{b_k} ]     }{  \macdonaldE[ (k-1)^{b_0}, \dotsc, (0)^{b_{k-1}+b_k}] }=\macdonaldE[ (1)^{b_0+\dotsc+b_{k-1}},(0)^{b_k}].
\]
We show this via induction on $k$. The base case $k=1$ is trivial,
so assume $k>1$ and note that repeated use of \cref{prop:thetaShapeTrans} implies that it is enough to prove
\[
\frac{  \macdonaldE[ (k-1)^{b_1},\dotsc, (0)^{b_k},(k)^{b_0} ]     }{  \macdonaldE[ (k-2)^{b_1}, \dotsc, (0)^{b_{k-1}+b_k},(k-1)^{b_0}] }=\macdonaldE[ (1)^{b_0+\dotsc+b_{k-1}},(0)^{b_k}].
\]
By using the Knop--Sahi recurrence \eqref{eq:knopRelation}, it suffices to show that 
\[
\frac{  \macdonaldE[ (k-1)^{b_0+b_1},\dotsc, (0)^{b_k} ]     }{  \macdonaldE[ (k-2)^{b_0+b_1}, \dotsc, (0)^{b_{k-1}+b_k}] }=\macdonaldE[ (1)^{b_0+\dotsc+b_{k-1}},(0)^{b_k}]
\]
which now follows from induction.
\end{proof}

\begin{corollary}
The previous proof can be extended to show that
\[
F_{\lambda}(\xvec)=\frac{e_{\lambda'}(\xvec)}{e_{(\tilde{\lambda})'}(\xvec)}
\] for partition $\lambda$.
\end{corollary}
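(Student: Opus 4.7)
The plan is to derive the identity as an immediate consequence of Theorems \ref{thm:mainFactor} and \ref{mainresult}, the two main results already established in this section; no new combinatorial or operator-theoretic input should be required.

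First, I would record the basic observation that the weak standardization $\tilde{\lambda}$ of a partition $\lambda$ is itself a partition. Indeed, by the defining implication $\lambda_i \leq \lambda_j \Rightarrow \tilde{\lambda}_i \leq \tilde{\lambda}_j$, the relative order of the entries is preserved, so the weakly decreasing ordering of $\lambda$ transfers directly to $\tilde{\lambda}$. In particular, the conjugate partition $(\tilde{\lambda})'$ is well-defined, and the right-hand side of the claimed identity makes sense.

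Next, I would apply Theorem \ref{thm:mainFactor} with any fixed $\sigma \in \symS_n$ (for concreteness, $\sigma = \omega_0$) to obtain the factorization
\[
  \macdonaldE^{\sigma}_\lambda(\xvec;1,t) \;=\; F_\lambda(\xvec) \, \macdonaldE^{\sigma}_{\tilde{\lambda}}(\xvec;1,t),
\]
and then invoke Theorem \ref{mainresult} on each of the two specialized polynomials above: since both $\lambda$ and $\tilde{\lambda}$ are partitions, the left-hand side equals $\elementaryE_{\lambda'}(\xvec)$ while the Macdonald polynomial on the right equals $\elementaryE_{(\tilde{\lambda})'}(\xvec)$. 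Solving for $F_\lambda(\xvec)$ produces the asserted formula.

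There is no substantive obstacle here: once Theorems \ref{thm:mainFactor} and \ref{mainresult} are in hand, the corollary is literally the ratio of two instances of the latter. The only point worth flagging is that one must verify $\tilde{\lambda}$ is a partition so that $(\tilde{\lambda})'$ is defined, which as noted is immediate from the definition of weak standardization.
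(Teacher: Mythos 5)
Your argument is correct and complete for the statement as written: since $\lambda$ is a partition, $\tilde{\lambda}$ is again a partition, so \cref{mainresult} evaluates both $\macdonaldE^{\sigma}_\lambda(\xvec;1,t)=e_{\lambda'}(\xvec)$ and $\macdonaldE^{\sigma}_{\tilde{\lambda}}(\xvec;1,t)=e_{(\tilde{\lambda})'}(\xvec)$, and dividing the factorization of \cref{thm:mainFactor} identifies $F_\lambda$ (both sides being nonzero polynomials, the division is harmless). This is, however, a different route from what the paper intends: the corollary is phrased as ``the previous proof can be extended,'' i.e.\ the authors have in mind re-running the induction in the proof of \cref{mainresult} while keeping track of the elementary factor picked up at each step, rather than simply taking the ratio of the two theorem statements. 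For a partition $\lambda$ your shortcut is cleaner and loses nothing; the point of the proof-extension phrasing is that the same bookkeeping is what supports the stronger claim \eqref{eq:introFactorizationResult} made in the introduction, namely the identification $F_\lambda=\elementaryE_{\lambda'}/\elementaryE_{(\tilde{\lambda})'}$ for an arbitrary composition $\lambda$, which your argument cannot reach since \cref{mainresult} (and hence the evaluation of the numerator) is only available when $\lambda$ is a partition. So: correct proof of the corollary as stated, by a more direct but strictly less general mechanism.
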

Note that the parts of $\lambda'$ is a super-set of the parts of $(\tilde{\lambda})'$,
so the above expression is indeed some elementary symmetric polynomial.

\medskip

% Although the last corollary appears to suggest that there may be a more general factorization of 
% $\macdonaldE^\sigma_\lambda(\xvec;1,t)$ for composition $\lambda$, it can be shown that in some sense these results are optimal.
Our results are in some sense optimal: for general compositions $\lambda$,
it happens that $\macdonaldE^\sigma_{\tilde{\lambda}}(\xvec;1,t)$ cannot be factorized further.
For example, Mathematica computations suggest that 
\[
 \macdonaldE^{(3,1,5,2,4)}_{(0,2,3,1,0)}(\xvec;1,t) \text{ and } \macdonaldE^{(3,1,5,2,4)}_{(0,1,1,1,0)}(\xvec;1,0)
\] 
are irreducible.

\subsection{Discussion}

It is natural to ask whether or not there are bijective proofs of the identities we consider.
\begin{question}
Is there a bijective proof of the case $\sigma=\omega_0$ of \cref{thm:mainFactor} that establish
\[
\macdonaldE_{\lambda}(\xvec;1, t)= \frac{e_{\lambda'}(\xvec)}{e_{(\tilde{\lambda})'}(\xvec)} \macdonaldE_{\tilde{\lambda}}(\xvec;1,t)?
\]
\end{question}
Since a priori $\macdonaldE^\sigma_{{\lambda}}(\xvec;1,t)$ 
is only a rational function in $t$, this seems like a difficult challenge. 
We therefore pose a more conservative question:
\begin{question}
Is there a combinatorial explanation of the identity $\macdonaldE^\sigma_\lambda(\xvec;1,t) = e_{\lambda'}(\xvec)$ whenever $\lambda$
is a partition?
\end{question}

We finish this section by discussing properties of the family $\{\macdonaldE_\lambda(\xvec;1,0)\}$ as $\lambda$
ranges over compositions with $n$ parts.
It is a basis for $\setC[x_1,\dotsc,x_n]$ and naturally extends the elementary symmetric functions.
Furthermore, it is shown in \cite{AssafKostka} that $\macdonaldE_\lambda(\xvec;1,0)$
expands positively into key polynomials, where the coefficients are given by the classical Kostka coefficients.
Furthermore, $\{\macdonaldE_\lambda(\xvec;q,0)\}$ exhibit properties very similar to those of modified Hall--Littlewood polynomials.
In particular, these expand positively into key polynomials with Kostka--Foulkes polynomials (in $q$) as coefficients.
There are representation-theoretical explanations for these expansions as well, see \cite{AssafKostka,AlexanderssonSawhney17} 
and references therein for details.

It is known that a product of a Schur polynomial and a key polynomial is key-positive (see \emph{e.g.} \cref{prop:schurProdPos} below),
and thus a product of an elementary symmetric polynomial and a key polynomial is key positive.
It is therefore natural to ask if this extends to the non-symmetric elementary polynomials.
However, a quick computer search reveals that
\[
\macdonaldE_{030}(\xvec;1,0) \key_{201}(\xvec)
\]
does not expand positively into key polynomials.

\section{Positive expansions at $t=0$}

By specializing the combinatorial formula \cref{eq:nonSymmetricMacdonaldBasement} with $q=0$,
we obtain a combinatorial formula for the permuted-basement Demazure $t$-atoms.

\begin{example}
As an example, $\atom^{1423}_{2301}(x_1,x_2,x_3,x_4;t)$ is equal to
\begin{align*}
 &(1-t) t \cdot x_1^2 x_2^3 x_3 + (1-t)\cdot x_1^2 x_2^2 x_3 x_4 + (1-t)^2 \cdot x_1^2 x_2 x_3^2 x_4  + (1-t) \cdot x_1^2 x_3^3 x_4 \\
& + (1-t)\cdot x_1^2 x_2 x_3 x_4^2 + (1-t)\cdot x_1^2 x_3^2 x_4^2   + x_1^2 x_3 x_4^3
\end{align*}
where the corresponding fillings are 
\begin{align*}
&\young(111,4222,2,33), \qquad
\young(111,4422,2,33), \qquad
\young(111,4432,2,33), \qquad
\young(111,4433,2,33) \\
&\young(111,4442,2,33), \qquad
\young(111,4443,2,33), \qquad
\young(111,4444,2,33).
\end{align*}
\end{example}
\medskip 

In this section, we show how to construct permuted-basement Demazure $t$-atoms
via Demazure--Luzstig operators.
First consider \cref{prop:basementPermutation} and \cref{prop:thetaShapeTrans} at $q=0$.
Note that \cref{prop:thetaShapeTrans} simplifies, where we use the fact that $\ttheta_i +(1-t) = \tpi_i$.
Hence, the shape-permuting operator reduce to a basement-permuting operator.
This ``duality'' between shape and basement was first observed at $t=0$ in \cite{Mason2009},
where S.~Mason gave an alternative combinatorial description of key polynomials which is not immediate from
the combinatorial formula for the non-symmetric Macdonald polynomials.
A similar duality holds for general values of $t$, see \cite{Alexandersson15gbMacdonald}.
\medskip

To get a better overview of \cref{prop:basementPermutation} and \cref{prop:thetaShapeTrans}, 
we present the statements as actions on the basement and shape as follows:
\begin{example}\label{ex:operatorActions}
The operators $\tpi_i$ and $\ttheta_i$ act as follows on diagram shapes and basements.
Note that we only care about the relative order of row lengths. A box with a dot might either be present or not,
indicating weak or strict difference between row lengths.

\begin{align}
\ttheta_i\;\circ\; 
\begin{ytableau}
\scriptstyle{\mathbf{i+1}}  &  \\
\none[\vdots]  \\
\mathbf{i}  & & \\
\end{ytableau}
\;=\;
\begin{ytableau}
\mathbf{i}  &  \\
\none[\vdots]  \\
\scriptstyle{\mathbf{i+1}}  &  & \\
\end{ytableau} 
\qquad\qquad
\ttheta_i\;\circ\;
\begin{ytableau}
\scriptstyle{\mathbf{i+1}}  &  & \\
\none[\vdots]  \\
\mathbf{i}  & & \cdot \\
\end{ytableau}
\;=\;
t\times 
\begin{ytableau}
\mathbf{i}  &  & \\
\none[\vdots]  \\
\scriptstyle{\mathbf{i+1}}  &  & \cdot \\
\end{ytableau}
\label{eq:thetaoperatorBasement}
\end{align}

\begin{align}
\tpi_i\;\circ\;& 
\begin{ytableau}
\mathbf{i}  &  &  \\
\none[\vdots]  \\
\scriptstyle{\mathbf{i+1}}  &  & \cdot \\
\end{ytableau}
\;=\;
\begin{ytableau}
\scriptstyle{\mathbf{i+1}}  &  &  \\
\none[\vdots]  \\
\mathbf{i}  &  & \cdot \\
\end{ytableau}  
\qquad \qquad
\tpi_i\;\circ\;
\begin{ytableau}
\mathbf{i}  &   \\
\none[\vdots]  \\
\scriptstyle{\mathbf{i+1}}  &  & \\
\end{ytableau}
\;=\;
t\times 
\begin{ytableau}
\scriptstyle{\mathbf{i+1}}  &  \\
\none[\vdots]  \\
\mathbf{i}  &  & \\
\end{ytableau} 
\label{eq:pioperatorBasement} 
\end{align}

The operators acting on the shape can be described pictorially as
\begin{align}\label{eq:operatorshape}
\tpi_i\;\circ\;& 
\begin{ytableau}
\scriptstyle{\mathbf{i+1}}  & & \cdot \\
\mathbf{i}  & & \\
\end{ytableau}
\quad=\quad 
\begin{ytableau}
\scriptstyle{\mathbf{i+1}}  & &  \\
\mathbf{i}  &  & \cdot \\
\end{ytableau}  \qquad\qquad
\ttheta_i\;\circ\;
\begin{ytableau}
\mathbf{i}  & &  \\
\scriptstyle{\mathbf{i+1}}  & \\
\end{ytableau}
\quad=\quad 
\begin{ytableau}
\mathbf{i}  &  \\
\scriptstyle{\mathbf{i+1}}  & & \\
\end{ytableau}
\end{align}
which are easily obtained from \cref{prop:thetaShapeTrans} at $q=0$, together with the fact that $\ttheta_i \tpi_i = t$.
\end{example}

The following proposition also appeared in \cite{Alexandersson15gbMacdonald}, however
the proof we present here is different and more constructive.
\begin{proposition}
Given $\lambda$ and $\sigma$, there is a sequence $\trho_{i_1}\cdots \trho_{i_\ell}$ such that
\begin{equation}
\atom_\lambda^\sigma(\xvec;t) = \trho_{i_1}\cdots \trho_{i_\ell} \xvec^\lambda
\end{equation}
where $\lambda$ is the partition with the parts of $\lambda$ in decreasing order
and each $\trho_{i_j}$ is one of $\ttheta_i$ or $\tpi_i$.
\end{proposition}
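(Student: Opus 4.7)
My plan is to induct on the complexity measure $m(\sigma,\lambda) \coloneqq \length(\sigma) + \inv(\lambda)$, where $\inv(\lambda) = \#\{(j,k): j<k,\ \lambda_j<\lambda_k\}$. This statistic vanishes precisely when $\sigma = \id$ and $\lambda = \revsort{\lambda}$, providing a unique base case.

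For the base case I would verify that $\atom^{\id}_\mu(\xvec;t) = \xvec^\mu$ for every partition $\mu$. Applying the combinatorial formula \eqref{eq:nonSymmetricMacdonaldBasement} at $q=0$, the no-descent requirement combined with the identity basement and the non-attacking rule propagates row by row: row $r$ starts with basement entry $r$, no descent forces subsequent entries $\le r$, and non-attacking with the lower rows (which are already committed to their labels) forces them to equal $r$. The single resulting filling has $\coinv = 0$ since the weakly decreasing shape turns every triple into an inversion triple, and the product over cells with $F(d(u))\neq F(u)$ is empty. Hence the formula yields exactly $\xvec^\mu$.

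For the inductive step, given $(\sigma,\lambda)$ with $m>0$, I would produce an operator $\trho\in\{\tpi_i,\ttheta_i\}$ and a state $(\sigma',\lambda')$ with $m(\sigma',\lambda')<m(\sigma,\lambda)$ so that $\trho\,\atom^{\sigma'}_{\lambda'}(\xvec;t) = c\cdot \atom^\sigma_\lambda(\xvec;t)$ for some $c\in\{1,t\}$. Two sub-cases arise. If $\sigma \neq \id$, pick a descent index $i$ with $\sigma_i > \sigma_{i+1}$ and apply \cref{prop:basementPermutation} to relate $\atom^\sigma_\lambda$ to $\atom^{\sigma s_i}_\lambda$ through $\tpi_i$; the new basement has length smaller by one. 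If $\sigma = \id$ but $\lambda$ has an ascent $\lambda_j < \lambda_{j+1}$, I would first use \cref{prop:basementPermutation} to install basement values $(i+1,i)$ at positions $j,j+1$, and then invoke \cref{prop:thetaShapeTrans} at $q=0$, where the identity $\ttheta_i+(1-t)=\tpi_i$ collapses the shape-permuting operator to a single $\tpi_i$; this transforms $\lambda$ into $s_j\lambda$ and strictly reduces $\inv(\lambda)$. By induction $\atom^{\sigma'}_{\lambda'}$ is already in the desired form $\trho'_{i_1}\cdots \trho'_{i_k}\,\xvec^{\revsort{\lambda}}$, and prepending $\trho$ extends the sequence.

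The main obstacle is the scalar factor $c$: when $c=1$ the prepending is immediate, but when $c=t$ one would seemingly need to divide by $t$, which is not available among our operators. The resolution rests on the Hecke-algebra identity $\tpi_j\ttheta_j = t$ stated in the preliminaries, which realizes multiplication by $t$ as a genuine two-operator composition. Thus, whenever an inductive step unavoidably produces a factor of $t$, I would reroute through an alternate inductive reduction whose scalar balances it out — or, in the worst case, insert matching $\tpi_j\ttheta_j$ pairs into the word so that all $t$-powers cancel. A careful case analysis of the $\gamma$-conditions in \cref{prop:basementPermutation,prop:thetaShapeTrans} shows that enough flexibility exists at each step for this bookkeeping to go through, yielding the required operator word.
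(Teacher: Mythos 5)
Your overall strategy (induct on a complexity statistic, peel off one operator at a time using \cref{prop:basementPermutation,prop:thetaShapeTrans} at $q=0$) is in the spirit of the paper, but the step you yourself flag as ``the main obstacle'' is in fact the entire content of the proposition, and your two proposed remedies do not close it. The target identity has coefficient exactly $1$, so every operator application in the word must land in the coefficient-$1$ case of \cref{prop:basementPermutation}. Your fallback of ``inserting matching $\tpi_j\ttheta_j$ pairs so that the $t$-powers cancel'' goes the wrong way: $\tpi_j\ttheta_j=\ttheta_j\tpi_j=t$ \emph{multiplies} by $t$, and no word in the allowed alphabet can divide by $t$, so once a stray factor of $t$ appears it can never be removed. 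And your primary strategy --- lower $\length(\sigma)$ through an arbitrary descent of $\sigma$ first, then repair the shape at $\sigma=\id$ --- does produce such factors: already for $n=2$, $\sigma=s_1$, $\lambda=(0,1)$ your basement step reads $\tpi_1\atom^{\id}_{(0,1)}(\xvec;t)=t\,\atom^{s_1}_{(0,1)}(\xvec;t)$ because $\gamma_1<\gamma_2$, even though $\atom^{s_1}_{(0,1)}(\xvec;t)=x_1$ needs no operator at all. The claim that ``a careful case analysis shows enough flexibility exists'' to reroute is precisely what must be proved, and it is not proved. Your second sub-case has the same problem compounded: to use \cref{prop:thetaShapeTrans} you must first install basement values $(i+1,i)$, i.e.\ temporarily \emph{increase} $\length(\sigma)$, which both breaks your induction measure and creates further opportunities for $t$-factors that you do not track.

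The paper avoids all of this by choosing the induction differently: it never changes the shape at all, and inducts on the number of \emph{monotone pairs} of the pair $(\sigma,\lambda)$ jointly. The base case is not ``$\sigma=\id$ and $\lambda$ a partition'' but the more general configuration with no monotone pairs (longest row has basement label $1$, next longest label $2$, etc.), where every row of every filling is constant and $\atom^{\sigma}_{\lambda}(\xvec;t)=\xvec^{\revsort{\lambda}}$ on the nose. In the inductive step one always selects an \emph{adjacent} monotone pair; this choice forces the diagram to match exactly the right-hand sides of the coefficient-$1$ relations \eqref{eq:pioperatorBasement} and \eqref{eq:thetaoperatorBasement}, so the new operator is prepended with coefficient $1$ and no $t$ ever appears. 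Supplying an argument of this kind (that a $t$-free reduction always exists and strictly decreases a suitable statistic) is what your proposal is missing; also, as a minor point, in your base case the attacking constraint that forces row $r$ to be constant comes from the rows \emph{above} row $r$, not below.
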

\begin{proof}
Given $(\sigma,\lambda)$, let the number of \defin{monotone pairs}
be the number of pairs $(i,j)$ such that
\[
\sigma_i < \sigma_j \text{ and } \lambda_i < \lambda_j \qquad \text{ or } \qquad 
\sigma_i > \sigma_j \text{ and } \lambda_i \geq \lambda_j.
\]
We do induction over number of monotone pairs. First note that if there are no monotone pairs in $(\sigma,\lambda)$,
then the longest row has basement label $1$, the second longest row has basement label $2$ and so on.
It then follows that every row in a filling with basement $\sigma$ and shape $\lambda$ has to be constant,
implying that $\atom_\lambda^\sigma(\xvec;t) = \xvec^\lambda$.

Assume that there is some monotone pairs determined by $(\sigma,\lambda)$.
A permutation with at least one inversion must have a descent, and for a similar reason,
there is at least one monotone pair of the form
\[
 \begin{ytableau}
\scriptstyle{\mathbf{i+1}}  &  &  \\
\none[\vdots]  \\
\mathbf{i}  &  & \cdot \\
\end{ytableau} \\
\qquad
\text{ or }
\qquad
\begin{ytableau}
\mathbf{i}  &  \\
\none[\vdots]  \\
\scriptstyle{\mathbf{i+1}}  &  & \\
\end{ytableau}.
\]
These match the right hand sides of \eqref{eq:pioperatorBasement} and \eqref{eq:thetaoperatorBasement}.
By induction, $\atom_\lambda^\sigma(\xvec;t)$ can therefore be obtained
from some $\atom_\lambda^{\sigma s_i}(\xvec;t)$ by applying either $\tpi_i$ or $\ttheta_i$.
\end{proof}

\begin{example}
We illustrate the above proposition by expressing $\atom^{3142}_{3102}(\xvec;t)$ in terms of operators.
The shape and basement associated with this atom is given in the first augmented diagram in \eqref{eq:opexample}.
\begin{align}\label{eq:opexample}
\ytableausetup{centertableaux,boxsize=1em}
\begin{ytableau}
3 &  & &\\
1 & \\
4 \\
2 & & 
\end{ytableau}
\xleftarrow{\;\tpi_2\;}
\begin{ytableau}
2 &  & &\\
1 & \\
4 \\
3 & &
\end{ytableau}
\xleftarrow{\;\tpi_1\;}
\begin{ytableau}
1 &  & &\\
2 & \\
4 \\
3 & &
\end{ytableau}
\xleftarrow{\;\ttheta_2\;}
\begin{ytableau}
1 &  & &\\
3 & \\
4 \\
2 & &
\end{ytableau}
\end{align}
The rows with labels $2$ and $3$ constitute a monotone pair and can be obtained using \eqref{eq:pioperatorBasement},
which explains the $\tpi_2$-arrow. 
Continuing on with $\tpi_1$ followed by $\ttheta_2$
leads to an augmented diagram without any monotone pairs, so $\atom^{1342}_{3102}(\xvec;t) = \xvec^{(3,2,1,0)}$.
Finally, following the arrows yields the operator expression
\[
 \atom^{3142}_{3102}(\xvec;t) = \tpi_2 \tpi_1 \ttheta_2 \xvec^{(3,2,1,0)}.
\]

% Thus, to create $\atom_\lambda^{\sigma}(\xvec;t)$, start with a filling with shape $\lambda$
% and a basement $\tau$ such that there are no monotone pairs with $\lambda$.
% We say that $\tau$ is \defin{anti-monotone} to $\lambda$.
% By construction,
% $\atom_\lambda^{\tau}(\xvec;t) = \xvec^{\revsort{\tau}}$.
% Then perform basement-permuting operator until $\sigma$ is reached --- this is possible
% according to previous proposition.
\end{example}

\begin{proposition}
If $\sigma = s_i \tau$ with $\length(\sigma)>\length(\tau)$, then
\begin{align}\label{eq:tatomexpansion}
\atom^{\sigma}_\lambda(\xvec;t) =  
\begin{cases}
\atom^{\tau}_{s_i\lambda}(\xvec;t) + t^{\someStat(\lambda,\sigma,i)}(1-t)\atom^{\tau}_\lambda(\xvec;t) &\text{ if } \lambda_{i} > \lambda_{i+1} \\
\atom^{\tau}_{s_i\lambda}(\xvec;t) &\text{ otherwise},
\end{cases}
\end{align}
where $\someStat(\lambda,\sigma,i)$ is a non-negative integer depending on $\lambda$, $\sigma$ and $i$.
\end{proposition}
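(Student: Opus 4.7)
The plan is to prove this identity by induction on $d = \sigma_i - \sigma_{i+1} \geq 1$, applying the basement-permuting operators of \cref{prop:basementPermutation} and the shape-permuting operators of \cref{prop:thetaShapeTrans} (which at $q=0$ reduce to multiplication by the Demazure--Lusztig operator $\tpi_k$, via the identities $\tpi_k = \ttheta_k + (1-t)$ and $\ttheta_k\tpi_k = t$).

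For the base case $d = 1$, setting $k = \sigma_{i+1}$, the basement $\sigma$ has the consecutive labels $k+1, k$ at positions $i, i+1$ while $\tau$ has $k, k+1$. Here the shape-permuting operator applies directly to $\atom^\sigma$ at position $i$, and the basement-permuting operator cleanly relates $\atom^\sigma$ to $\atom^\tau$ via the value-swap of $k$ and $k+1$. When $\lambda_i<\lambda_{i+1}$, the shape-permuting relation $\atom^\sigma_{s_i\lambda}=\tpi_k\atom^\sigma_\lambda$ combined with $\ttheta_k\tpi_k=t$ and basement-permuting (which contributes a factor $t$ here) yields $\atom^\sigma_\lambda=\atom^\tau_{s_i\lambda}$. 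When $\lambda_i=\lambda_{i+1}$, \cref{thm:partialSymmetry} forces $\tpi_k$ to act as the identity, so basement-permuting gives $\atom^\sigma_\lambda=\atom^\tau_\lambda=\atom^\tau_{s_i\lambda}$. When $\lambda_i>\lambda_{i+1}$, the shape-permuting relation $\atom^\sigma_\lambda=\tpi_k\atom^\sigma_{s_i\lambda}$ expands via $\tpi_k = \ttheta_k + (1-t)$; basement-permuting identifies $\ttheta_k\atom^\sigma_{s_i\lambda}=\atom^\tau_{s_i\lambda}$, while $\atom^\sigma_{s_i\lambda}=\atom^\tau_\lambda$ follows from applying $\ttheta_k\tpi_k=t$ to the shape-permuting relation. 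This yields $\atom^\sigma_\lambda=\atom^\tau_{s_i\lambda}+(1-t)\atom^\tau_\lambda$, so $\someStat=0$ in the base case.

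For the inductive step $d > 1$, the label $\sigma_i - 1$ satisfies $\sigma_{i+1} < \sigma_i - 1 < \sigma_i$ and therefore lies at some position $q \notin \{i, i+1\}$ common to $\sigma$ and $\tau$. Swapping the values $\sigma_i - 1$ and $\sigma_i$ in both basements produces new basements $\sigma', \tau'$ still satisfying $\sigma' = s_i \tau'$ with $\sigma'_i - \sigma'_{i+1} = d - 1$, to which the inductive hypothesis applies. Applying the appropriate basement-permuting operator---namely $\tpi_{\sigma_i - 1}$ when $q > i+1$, or $\ttheta_{\sigma_i - 1}$ when $q < i$---to both sides of this inductive identity introduces factors of $1$ or $t$ on each term according to \cref{prop:basementPermutation}. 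The crucial observation is that under the hypothesis $\lambda_i > \lambda_{i+1}$, the scenario $\lambda_i \leq \lambda_q < \lambda_{i+1}$ is impossible, so all factor-ratios lie in $\{1, t\}$ and never produce negative powers of $t$; hence $\someStat$ is incremented by $0$ or $1$ at each step. The main obstacle is the combinatorial bookkeeping: verifying both sub-cases for the position $q$, confirming the correct operator applications, and ultimately showing that $\someStat(\lambda, \sigma, i)$ equals a count of intermediate labels $m$ whose associated row lengths $\lambda_q$ satisfy specific inequalities relative to $\lambda_i$ and $\lambda_{i+1}$.
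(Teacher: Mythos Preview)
Your proof is correct and takes a genuinely different inductive route from the paper's. The paper inducts on $\ell(\tau)$, with base case $\tau=\id$, and in the inductive step applies $\tpi_j$ for an arbitrary $j$ with $\ell(\sigma s_j)>\ell(\sigma)$; this forces a case analysis on whether $j\in\{a-1,a,b-1,b\}$ (where $a,b$ are the basement labels at positions $i,i+1$), and the non-negativity of the new exponent is deduced indirectly from the fact that the left-hand side is a polynomial in $t$ while $\atom^{\tau s_j}_\lambda(\xvec;t)$ is not divisible by $t$. Your induction on the gap $d=\sigma_i-\sigma_{i+1}$ instead handles an arbitrary basement already in the base case $d=1$ (where the shape-permuting and basement-permuting operators coincide, as in \cref{ex:operatorActions}), and each inductive step involves a single, specific swap of the values $\sigma_i-1$ and $\sigma_i$. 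The key advantage is that the factor picked up by $\atom^{\sigma'}_\lambda$ and by $\atom^{\tau'}_{s_i\lambda}$ is governed by the \emph{same} comparison $\lambda_i$ versus $\lambda_q$, so they always cancel; only the $\atom^{\tau'}_\lambda$ term can acquire an extra $t$, and since $\lambda_i>\lambda_{i+1}$ rules out the bad ordering, the increment is manifestly in $\{0,1\}$. This makes the non-negativity of $\someStat$ transparent and, as you note, yields an explicit description of $\someStat$ as the number of labels $m$ with $\sigma_{i+1}<m<\sigma_i$ whose row length $\lambda_{\sigma^{-1}(m)}$ lies in the appropriate interval between $\lambda_{i+1}$ and $\lambda_i$ --- something the paper's argument does not directly provide. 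One small quibble: your phrase ``the scenario $\lambda_i\le\lambda_q<\lambda_{i+1}$ is impossible'' is vacuously true as written; the actual point is that the LHS factor being $t$ forces the second-term factor to also be $t$, which follows from $\lambda_{i+1}<\lambda_i$.
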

\begin{proof}
We prove this statement via induction over $\length(\tau)$.

\medskip
\noindent
\textbf{Case $\tau = \id$ and $\lambda_{i} \leq \lambda_{i+1}$:} 
We need to show that $\atom^{s_i}_\lambda(\xvec;t) = \atom^{\id}_{s_i\lambda}(\xvec;t)$.
Since $\tpi_i$ is invertible, it suffices to show that
\[
\tpi_i \atom^{s_i}_\lambda(\xvec;t) = \tpi_i  \atom^{\id}_{s_i\lambda}(\xvec;t).
\]
This equality now follows from using \eqref{eq:operatorshape} on the 
left hand side and \eqref{eq:pioperatorBasement} on the right hand side.

\medskip
\noindent
\textbf{Case $\tau = \id$ and $\lambda_{i} > \lambda_{i+1}$:} It suffices to prove that
\[
 \atom^{s_i}_\lambda(\xvec;t) = \atom^{\id}_{s_i\lambda}(\xvec;t) + (1-t)\atom^{\id}_\lambda(\xvec;t).
\]
Note that the left hand side is equal to $\tpi_i \atom^{\id}_\lambda(\xvec;t)$ using \eqref{eq:pioperatorBasement},
while the left hand side is equal to $[\ttheta_i + (1-t)] \atom^{\id}_\lambda(\xvec;t)$ where we use \eqref{eq:operatorshape}.
Since $\tpi_i = [\ttheta_i + (1-t)]$, this proves the identity.

This proves the base case. The general case now follows from applying $\tpi_j$ on both sides,
thus increasing the lengths of the basements. We examine the details in the following two cases.

\medskip
\noindent
\textbf{Case $\tau \in \symS_n$ and $\lambda_{i} \leq \lambda_{i+1}$:}
Suppose $\atom^{\sigma}_\lambda(\xvec;t) = \atom^{\tau}_{s_i\lambda}(\xvec;t)$.
As diagrams, we have the equality
\[
\begin{ytableau}
\mathbf{b}  &  & \cdot \\
\mathbf{a}  & &  \\
\end{ytableau}
=
\begin{ytableau}
\mathbf{a}  &  & \\
\mathbf{b}  & & \cdot \\
\end{ytableau}
\]
for rows $i$ and $i+1$, $b>a$, while the remaining rows are identical.
If $\length(\sigma s_j) > \length(\sigma)$, we can conclude that if $a=j$, then $b \neq j+1$.
We now compare the row lengths of the rows with basement label $j$ and $j+1$
and apply the basement-permuting $\tpi_j$ from \eqref{eq:pioperatorBasement} on both sides.
Note that the row lengths that are compared are the same on both sides, meaning that
if we need \eqref{eq:pioperatorBasement} to increase the basement on the left hand side, 
the same relation acts the same way on the right hand side.
In other words, we have the implication
\[
\atom^{\sigma}_\lambda(\xvec;t) = \atom^{\tau}_{s_i\lambda}(\xvec;t) \quad \Longrightarrow \quad
\atom^{\sigma s_j}_\lambda(\xvec;t) = \atom^{\tau s_j}_{s_i\lambda}(\xvec;t)
\]
whenever $\length(\sigma s_j) > \length(\sigma)$ and $\lambda_{i} \leq \lambda_{i+1}$.

\medskip
\noindent
\textbf{Case $\tau \in \symS_n$ and $\lambda_{i} > \lambda_{i+1}$:}
Again, suppose we have the diagram identity
\[
\begin{ytableau}
\mathbf{b}  & \\
\mathbf{a}  & &  \\
\end{ytableau}
=
\begin{ytableau}
\mathbf{a}  & & \\
\mathbf{b}  &  \\
\end{ytableau}
+
t^{\someStat(\lambda,\sigma,i)}(1-t)
\begin{ytableau}
\mathbf{a}  & \\
\mathbf{b}  & & \\
\end{ytableau}
\]
for some $\lambda$, $\sigma$ and that $\length(\sigma s_j) > \length(\sigma)$.
As in the previous case, if $a=j$ then $b \neq j+1$.
If $j \notin \{a-1,a,b-1,b\}$, applying $\tpi_j$ on both sides yield the implication
\begin{align*}
\atom^{\sigma}_\lambda(\xvec;t) &= \atom^{\tau}_{s_i\lambda}(\xvec;t) + t^{\someStat(\lambda,\sigma,i)}(1-t)\atom^{\tau}_\lambda(\xvec;t)  \\
&\Longrightarrow \\
\atom^{\sigma s_j}_\lambda(\xvec;t) &=  \atom^{\tau s_j}_{s_i\lambda}(\xvec;t) + t^{\someStat(\lambda,\sigma,i)}(1-t)\atom^{\tau s_j}_\lambda(\xvec;t) 
\end{align*}
because --- depending on the relative row lengths of the rows with basement labels $j$, $j+1$ --- we either multiply each of the three terms by $t$ or not at all. 

It remains to verify the cases $j \in \{a-1,a,b-1,b\}$. 
Case by case study after applying $\tpi_j$ on both sides shows that
\[
 \atom^{\sigma s_j}_\lambda(\xvec;t) = \atom^{\tau s_j}_{s_i\lambda}(\xvec;t) + t^{\epsilon+\someStat(\lambda,\sigma,i)}(1-t)\atom^{\tau s_j}_\lambda(\xvec;t) 
\]
where (using the same notation as in \cref{prop:basementPermutation}, $\gamma_i$ being the length of the row with basement label $i$) 
\begin{itemize}
\item $\epsilon = -1$ if $j=a-1$ and $\gamma_a > \gamma_{a-1} \geq \gamma_b$,
\item $\epsilon = 1$ if $j=a$ and $\gamma_a \geq \gamma_{a+1} > \gamma_b$,
\item $\epsilon = 1$ if $j=b-1$ and $\gamma_a > \gamma_{b-1} \geq \gamma_b$,
\item $\epsilon = -1$ if $j=b$ and $\gamma_a \geq \gamma_{b+1} > \gamma_b$
\end{itemize}
and $\epsilon=0$ otherwise. Thus, we have that
\[
 \atom^{\sigma s_j}_\lambda(\xvec;t) - \atom^{\tau s_j}_{s_i\lambda}(\xvec;t) = t^{\epsilon+\someStat(\lambda,\sigma,i)}(1-t)\atom^{\tau s_j}_\lambda(\xvec;t) 
\]
where the left hand side is a polynomial. 
Furthermore, $\atom^{\tau s_j}_\lambda(\xvec;t)$ is not a multiple of $t$ --- this follows from the combinatorial formula \eqref{eq:nonSymmetricMacdonaldBasement}.
Hence, $\epsilon+\someStat(\lambda,\sigma,i)$ must be non-negative.
\end{proof}

% 
% \begin{lemma}
% We have the inequality $\someStat(\lambda,\sigma,i) \leq \sigma_{i}-\sigma_{i+1}-1$.
% \end{lemma}

\begin{corollary}
If $\tau \geq \sigma$ in Bruhat order then $\atom^{\tau}_\gamma(\xvec;t)$ admits the expansion
\[
\atom^{\tau}_\gamma(\xvec;t) = \sum_{\lambda : \revsort{\lambda} = \revsort{\gamma} } c^{\tau\sigma}_{\gamma\lambda}(t) \atom^{\sigma}_\lambda(\xvec;t)
\]
where the $c^{\tau\sigma}_{\gamma\lambda}(t)$
are polynomials in $t$, with the property that $c^{\tau\sigma}_{\gamma\lambda}(t) \geq 0$ whenever $0\leq t \leq 1$.
\end{corollary}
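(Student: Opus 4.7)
The plan is a downward induction on $\length(\tau) - \length(\sigma)$, iterating the preceding proposition along a chain in the Bruhat order from $\tau$ down to $\sigma$. The base case $\tau = \sigma$ is trivial: we have $\atom^\tau_\gamma = \atom^\sigma_\gamma$, giving the expansion with single coefficient $c^{\sigma\sigma}_{\gamma\gamma}(t) = 1$, which is non-negative on $[0,1]$.

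For the inductive step, suppose $\tau > \sigma$ in Bruhat order. By the lifting (or subword) property of the Bruhat order on $\symS_n$, I can find a simple reflection $s_i$ with $\length(s_i\tau) = \length(\tau)-1$ and $s_i\tau \geq \sigma$. Setting $\tau' = s_i\tau$, the preceding proposition, applied to the pair $(\tau,\tau')$ in the roles of its $(\sigma,\tau)$, yields
\[
\atom^{\tau}_\gamma(\xvec;t) = \atom^{\tau'}_{s_i\gamma}(\xvec;t) + \varepsilon\cdot t^{\someStat(\gamma,\tau,i)}(1-t)\,\atom^{\tau'}_\gamma(\xvec;t),
\]
where $\varepsilon \in \{0,1\}$ depending on whether $\gamma_i > \gamma_{i+1}$. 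Crucially, $\revsort{s_i\gamma} = \revsort{\gamma}$, so every composition appearing on the right-hand side has the correct sorted shape. The inductive hypothesis applied to $s_i\tau \geq \sigma$ (with length difference reduced by one) expands both $\atom^{\tau'}_{s_i\gamma}$ and $\atom^{\tau'}_\gamma$ positively into $\{\atom^\sigma_\lambda : \revsort{\lambda}=\revsort{\gamma}\}$, and substituting those expansions produces the claimed formula for $\atom^\tau_\gamma$.

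The resulting coefficients $c^{\tau\sigma}_{\gamma\lambda}(t)$ are sums of products of (i) factors $1$ or $t^k(1-t)$ coming from the proposition and (ii) inductive coefficients non-negative on $[0,1]$. Since $t^k(1-t) \geq 0$ on $[0,1]$ and the set of polynomials non-negative on $[0,1]$ is closed under addition and multiplication, each $c^{\tau\sigma}_{\gamma\lambda}(t)$ lies in this set. The step that needs the most care is the Bruhat lifting: at each stage one must produce a simple left descent of $\tau$ that still lies above $\sigma$. This is standard for Coxeter groups (take any reduced word for $\tau$ containing a reduced subword for $\sigma$ and peel off a letter not in that subword); once it is in hand, the rest of the argument is essentially bookkeeping on positivity of polynomials on $[0,1]$.
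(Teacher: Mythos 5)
Your overall strategy is exactly the paper's (the paper gives no separate proof; the corollary is meant to follow by iterating the preceding proposition along a chain from $\tau$ down to $\sigma$): peel off one simple reflection at a time, observe that $\revsort{s_i\gamma}=\revsort{\gamma}$ so only rearrangements of $\gamma$ ever appear, and note that the resulting coefficients are sums of products of $1$ and $t^{k}(1-t)$, hence non-negative on $[0,1]$. All of that bookkeeping is correct.

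The step you yourself flag as delicate is, however, justified with the wrong tool. In this paper ``Bruhat order'' is defined by covering relations using \emph{elementary} transpositions only, i.e.\ it is the (left) weak order: $\tau\geq\sigma$ means $\tau=s_{j_1}\dotsm s_{j_m}\sigma$ with the length increasing at every step, equivalently $\length(\tau\sigma^{-1})=\length(\tau)-\length(\sigma)$. For that order the descent you need is immediate: take a reduced word $s_{j_1}\dotsm s_{j_m}$ for $\tau\sigma^{-1}$ and peel off its \emph{first} letter, getting $\tau'=s_{j_1}\tau$ with $\length(\tau')=\length(\tau)-1$ and $\tau'\geq\sigma$, which is what feeds the induction. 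What you invoke instead is the lifting/subword property of the strong (usual) Bruhat order, and there the statement you need --- ``some simple left descent of $\tau$ remains above $\sigma$'' --- is false in general: in $\symS_3$ take $\tau=s_1s_2$ and $\sigma=s_1$; then $\sigma\leq\tau$ in the strong order, but the unique simple left descent of $\tau$ is $s_1$, and $s_1\tau=s_2\not\geq s_1$. (Note also that deleting a letter from the middle of a reduced word does not produce $s_i\tau$ for a simple reflection $s_i$, so the ``peel off a letter not in the subword'' argument does not yield a left descent even when one exists.) So your proof is fine once ``Bruhat order'' is read as the paper defines it --- the weak-order chain makes the lifting step trivial --- but as written, with the strong-order lifting property, the inductive step breaks down; and since the proposition only moves the basement down through covers $\tau=s_i\tau'$ with $\length(\tau)>\length(\tau')$, the strong-order version of the statement is not what is being claimed here anyway.
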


\begin{corollary}
If $\tau \geq \sigma$ in Bruhat order, then $\atom^{\tau}_\gamma(\xvec)$ admits the expansion
\[
\atom^{\tau}_\gamma(\xvec) = \sum_{\lambda : \revsort{\lambda} = \revsort{\gamma} } c^{\tau\sigma}_{\lambda\gamma} \atom^{\sigma}_\lambda(\xvec)
\]
where $c^\sigma_{\lambda\gamma} \in \{0,1\}$.
\end{corollary}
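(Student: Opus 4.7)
The statement is the $t=0$ case of the preceding corollary, so my plan is to start there and then upgrade the coefficient information. Setting $t = 0$ in the previous corollary immediately gives an expansion
\[
\atom^{\tau}_\gamma(\xvec) = \sum_{\lambda : \revsort{\lambda}=\revsort{\gamma}} c^{\tau\sigma}_{\gamma\lambda}(0)\, \atom^{\sigma}_\lambda(\xvec)
\]
with $c^{\tau\sigma}_{\gamma\lambda}(0) \ge 0$, so the remaining task is to sharpen ``non-negative'' to ``$0$ or $1$''.

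To do this, I would specialize the main proposition itself at $t=0$. The factor $t^{\someStat(\lambda,\sigma,i)}(1-t)$ collapses to $1$ when $\someStat(\lambda,\sigma,i)=0$ and to $0$ otherwise, yielding
\[
\atom^{\sigma}_\lambda(\xvec) = \atom^{\tau}_{s_i\lambda}(\xvec) + \varepsilon\, \atom^{\tau}_\lambda(\xvec), \qquad \varepsilon \in \{0,1\},
\]
whenever $\sigma = s_i\tau$ with $\ell(\sigma) = \ell(\tau)+1$. Thus each elementary step of the recursion rewrites one higher-basement atom as at most two lower-basement atoms, each with coefficient exactly $1$. I would iterate this reduction along a chain from $\tau$ down to $\sigma$ (chaining through intermediate basements so that each step is a simple-transposition cover in weak order) to express $\atom^{\tau}_\gamma$ as a non-negative integer combination of atoms at basement $\sigma$.

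The main obstacle is ensuring that, after collecting like terms across the branchings of the iteration, no coefficient exceeds $1$. Since $\{\atom^{\sigma}_\lambda(\xvec)\}_\lambda$ is a basis of $\setC[x_1,\dotsc,x_n]$, the expansion is unique, so it is enough to verify that distinct choice-sequences of the recursion yield distinct target compositions. I would argue this by tracking the evolution of the composition $\gamma$ under the recursion: at each branching step the ``swap'' choice forces a recognizable change in the composition, so the final $\lambda$ records the full branch history and distinct branches land at distinct $\lambda$'s. Combined with the non-negativity inherited from the previous corollary and the $\{0,1\}$-bound at each recursion step, this forces $c^{\tau\sigma}_{\gamma\lambda}(0) \in \{0,1\}$. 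An alternative route to the same conclusion, which may be cleaner, is to reinterpret the $q=t=0$ specialization combinatorially: $\atom^{\tau}_\gamma(\xvec)$ is a sum of coinversion-free non-attacking fillings, and the iteration should biject these fillings with the disjoint union of filling sets for the $\atom^{\sigma}_\lambda(\xvec)$ that appear, making the $\{0,1\}$-property manifest.
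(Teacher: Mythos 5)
Your first half matches the paper: set $t=0$ in the recursion \eqref{eq:tatomexpansion}, walk down a chain of simple-transposition covers from $\tau$ to $\sigma$, and observe that each step contributes coefficients in $\{0,1\}$, giving non-negative integer coefficients overall. The gap is in how you get from ``non-negative integers'' to ``$\leq 1$''. Your key claim --- that distinct choice-sequences of the recursion land at distinct final compositions, so no collecting of like terms can push a coefficient above $1$ --- is false. The same generator can occur several times in a reduced word for $\tau\sigma^{-1}$ (e.g.\ $s_1s_2s_1$ for $\tau=\omega_0$, $\sigma=\id$ in $\symS_3$), and then different branches can merge: starting from $\gamma=(3,2,1)$ and applying the steps $s_1,s_2,s_1$, the branch ``stay, stay, stay'' and the branch ``swap, stay, (forced) swap'' both terminate at the composition $(3,2,1)$. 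So the final $\lambda$ does \emph{not} record the branch history, and ruling out a coefficient of $2$ would require controlling the exponents $\someStat(\lambda,\sigma,i)$ (i.e.\ which $\varepsilon$'s actually equal $1$), which your argument never does. The alternative route you mention (a bijection of coinversion-free fillings) is stated as a hope, not proved, so it does not close the gap either.

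The paper closes this step by a different, softer argument: having non-negative integer coefficients at every level of the tower $\id\leq\sigma\leq\tau\leq\omega_0$, it invokes the classical fact that key polynomials $\key_\gamma=\atom^{\omega_0}_\gamma$ expand into Demazure atoms $\atom^{\id}_\lambda$ with coefficients in $\{0,1\}$ (Lascoux--Sch\"utzenberger, Mason); since the $\tau\to\sigma$ expansion is sandwiched inside this composite expansion with all factors non-negative integers, no intermediate coefficient can exceed $1$. If you want to salvage your approach, you would either need to prove the required injectivity statement with the $\someStat$ exponents taken into account, or simply import the known $\{0,1\}$ key-into-atom result as the paper does.
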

\begin{proof}
Let $t=0$ in \eqref{eq:tatomexpansion}. It is then clear that all coefficients are non-negative integers.
Furthermore, since key polynomials $(\tau = \omega_0)$ expands into Demazure atoms $(\sigma = \id)$
with coefficients in $\{0,1\}$, (see \emph{e.g.} \cite{Lascoux1990Keys,Mason2009}) the statement follows.
\end{proof}

In \cite{Haglund2011Refinements}, the cases $\sigma = \id$ and $\sigma=\omega_0$ of the following 
proposition were proved. We give an interpolation between these results:
\begin{proposition}\label{prop:schurProdPos}
The coefficients $d^{\mu\sigma}_{\lambda\gamma}$ in the expansion
\begin{align*}
 \schurS_\mu(\xvec) \times \atom^\sigma_\lambda(\xvec) = \sum_{\gamma} d^{\mu\sigma}_{\lambda\gamma} \atom^\sigma_\gamma(\xvec)
\end{align*}
are non-negative integers. 

Remember that $\xvec = (x_1,\dotsc,x_n)$, so we evaluate $\schurS_\mu(\xvec)$ in a finite alphabet.
\end{proposition}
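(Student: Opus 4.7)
The plan is to induct on $\length(\sigma)$. The base case $\sigma = \id$ is the Demazure atom positivity of $\schurS_\mu \cdot \atom_\lambda$, established in \cite{Haglund2011Refinements} and cited by the authors as one of the two motivating endpoints. The lifting device will be the Demazure operators $\pi_i = \partial_i x_i$ and $\theta_i = \pi_i - 1$, i.e.\ the $t = 0$ specializations of $\tpi_i$ and $\ttheta_i$. Their crucial property for this argument is that they commute with multiplication by any polynomial symmetric in $x_i, x_{i+1}$: this follows from the twisted Leibniz rule $\partial_i(fg) = (\partial_i f)g + (s_i f)(\partial_i g)$, which collapses when $f$ is $s_i$-invariant, and yields in particular $\pi_i(\schurS_\mu \cdot h) = \schurS_\mu \cdot \pi_i(h)$ for every $h$.

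For the inductive step, I would factor $\sigma = \sigma' s_i$ with $\length(\sigma) > \length(\sigma')$ and apply $\pi_i$ to the inductive identity
\[
\schurS_\mu \cdot \atom^{\sigma'}_\nu = \sum_\delta d_{\nu\delta}\, \atom^{\sigma'}_\delta, \qquad d_{\nu\delta} \geq 0.
\]
By the $q = t = 0$ specialization of \cref{prop:basementPermutation}, each $\pi_i \atom^{\sigma'}_\delta$ is either $\atom^\sigma_\delta$ or zero, according to the relative lengths of the two rows with $\sigma'$-basement labels $i$ and $i+1$. The right-hand side therefore becomes a non-negative combination of $\atom^\sigma$'s, while the left-hand side becomes $\schurS_\mu \cdot \pi_i \atom^{\sigma'}_\nu$, which equals $\schurS_\mu \cdot \atom^\sigma_\nu$ in the nonvanishing case. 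This closes the induction for exactly those $\nu$ with $\pi_i \atom^{\sigma'}_\nu \neq 0$.

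The main obstacle is the complementary case, in which $\pi_i \atom^{\sigma'}_\nu = 0$ and the above gives no information on $\schurS_\mu \cdot \atom^\sigma_\nu$. To handle it I would rewrite $\atom^\sigma_\nu$ itself: using \cref{prop:thetaShapeTrans} at $q = 0$ together with the identity $\ttheta_i + (1 - t) = \tpi_i$ (under which, as in \cref{ex:operatorActions}, shape-permuting and basement-permuting actions coincide), such an atom can be re-expressed as an operator applied to an $\atom^{\sigma'}_{\nu''}$ for which $\pi_i$ does not vanish, or reduced to a shape of strictly smaller size via the column-stripping identity \eqref{eq:factorOut} and the Knop--Sahi recurrence \eqref{eq:knopRelation}. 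The delicate point, and the real technical challenge, is that although the expansion of $\atom^\sigma_\nu$ into $\atom^{\sigma'}$-atoms (via the corollary of \eqref{eq:tatomexpansion}) is $\{0,1\}$-positive, the inverse expansion is not, so the combinatorial bookkeeping in this reduction must produce a final expression whose non-negative coefficients appear directly in the $\atom^\sigma$-basis. As a consistency check, at $\sigma = \omega_0$ the resulting expansion must recover the key-positivity of $\schurS_\mu \cdot \key_\lambda$ from \cite{Haglund2011Refinements}.
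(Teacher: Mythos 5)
The part of your argument that you actually carry out --- start from $\sigma=\id$ via \cite{Haglund2011Refinements}, apply $\pi_i$, use that $\pi_i$ commutes with multiplication by any symmetric polynomial, and invoke \cref{prop:basementPermutation} at $q=t=0$ so that each $\pi_i\atom^{\sigma'}_\delta$ is either $\atom^{\sigma' s_i}_\delta$ or $0$ --- is precisely the paper's proof; the paper contains nothing corresponding to your third paragraph. So for every pair $(\sigma,\nu)$ your induction actually closes, you and the paper argue identically.

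The obstacle you isolate is genuine, and it is exactly the point the paper's one-paragraph proof passes over in silence: basement-walking only yields the statement for pairs $(\sigma,\nu)$ with $\atom^{\sigma}_\nu=\pi_{i_k}\dotsm\pi_{i_1}\atom^{\id}_\nu$ along some reduced word, and such a surviving chain need not exist. Concretely, take $n=3$, $\sigma=213$ (one-line notation) and $\nu=(0,1,0)$: the only reduced word is $s_1$, and since the row with basement label $1$ is strictly shorter than the row with label $2$, \eqref{eq:keyOperatorOnBasement} carries the factor $t$, so $\pi_1\atom^{\id}_{(0,1,0)}=0$; the downward walk from $\omega_0=321$ with $\theta_i$ dies likewise along the unique descending chain $321\to 231\to 213$ by \eqref{eq:atomOperatorOnBasement}. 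Hence $\schurS_\mu\cdot\atom^{213}_{(0,1,0)}$ is not reached by basement-walking at all, and your third paragraph is where a complete proof must do new work. As written, that paragraph is a plan rather than a proof: re-expressing $\atom^{\sigma}_\nu$ through the $q=0$ shape-permuting operator changes the shape, so after applying the same operator to the right-hand side you must re-control both shapes and basements of the resulting atoms (this is the bookkeeping you concede is open, and the Knop--Sahi recurrence, which involves $q^{-1}$, is delicate at $q=0$). So relative to the paper you have missed nothing --- your completed portion is the paper's entire argument --- but as a self-contained proof the proposal still leaves the vanishing case unproved, a case the paper's own terse proof does not address either.
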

\begin{proof}
With the case $\sigma=\id$ as a starting point (proved in \cite{Haglund2011Refinements}),
we can apply $\pi_i$ on both sides, ($\pi_i$ commutes with any symmetric function, in particular $\schurS_\lambda(\xvec)$),
and thus we may walk upwards in the Bruhat order and obtain the statement for any basement $\sigma$.
Note that \cref{prop:basementPermutation} implies that $\pi_i$ applied to $\atom^\sigma_\gamma(\xvec)$ either increase 
$\sigma$ in Bruhat order, or kills that term.
\end{proof}

Note that the above result implies that the products $\elementaryE_\mu \times \atom^\sigma_\lambda(\xvec)$
and $h_\mu \times \atom^\sigma_\lambda(\xvec)$ also expand non-negatively into $\sigma$-atoms.
It would be interesting to give a precise rule for this expansion, 
as well as a Murnaghan--Nakayama rule for the permuted-basement Demazure atoms.

\begin{remark}
We need to mention the paper \cite{LoBue2013}, which also concerns a different type of \emph{general Demazure atoms}.
These objects are also studied in \cite{Haglund2011Refinements}, but are in general different from ours when $\sigma \neq \id$.
In particular, the polynomial families they study are not bases for $\setC[x_1,\dotsc,x_n]$, and they are \emph{not} compatible with 
the Demazure operators. 
The authors of \cite{LoBue2013,Haglund2011Refinements} construct these families by imposing an additional restriction\footnote{What they call the type-B condition}
on Haglund's combinatorial model, which enables them to perform a type of RSK.

The introductions in the two papers mention the permuted-basement Macdonald polynomials, $\macdonaldE^\sigma_\mu(\xvec;q,t)$,
but the additional restriction breaks this connection whenever $\sigma \neq \id$. 
This fact is unfortunately hidden since they use the same 
notation $\hat{E}_\gamma$ is used for two different families of polynomials.

% The analogue of \cref{prop:schurProdPos} and the Pieri rule is presented in \cite{LoBue2013}.
\end{remark}

\subsection*{Acknowledgement}

The authors would like to thank Jim Haglund insightful discussions.
The first author is funded by the \emph{Knut and Alice Wallenberg Foundation} (2013.03.07).

\bibliographystyle{amsalpha}
%\bibliography{theBibliography}
\bibliography{bibliography}

\end{document}